\newtheorem{theorem}{Theorem}[section]
\newtheorem{lemma}[theorem]{Lemma}
\newcommand{\rr}{\mathbb{R}}
\newcommand{\nn}{\mathbb{N}}
\nonstopmode \numberwithin{equation}{section}
\begin{document}

\title{Quasi-orthogonality and zeros of some $\boldsymbol{_2\phi_2}$ and $\boldsymbol{_3\phi_2}$ polynomials }

\author{
  P P Kar$^{a,1},$ K Jordaan$^b,$ P Gochhayat$^{a,2},$ M K Nangho$^c$\\
$^{a}$Department of  Mathematics, Sambalpur University,
Sambalpur, 768019, Odisha, India\\
$^{1}$Email: pinakipk@gmail.com,~
$^{2}$Email: pgochhayat@gmail.com\\
$^b$Department of  Decision Sciences, University of South Africa,
Pretoria, 0003,
South Africa \\
Email: jordakh@unisa.ac.za\\
$^c$Department of Mathematics and Computer Science, University of Dschang, 67, Dschang, Cameroon \\
Email: maurice.kenfack@univ-dschang.org }

\maketitle

\begin{abstract}
\noindent We state and prove the $q$-extension of a result due to
Johnston and Jordaan (cf. \cite{Johnston-2015}) and make use of
this result, the orthogonality of $q$-Laguerre,
little $q$-Jacobi, $q$-Meixner and Al-Salam-Carlitz I polynomials as well as contiguous relations satisfied by the polynomials, to establish the quasi-orthogonality
of certain $_2\phi_2$ and $_3\phi_2$ polynomials. The location and interlacing properties of the real zeros of these quasi-orthogonal polynomials are studied. Interlacing properties of the zeros of $q$-Laguerre quasi-orthogonal polynomials $L_n^{(\delta)}(z;q)$ when $-2<\delta<-1$ with those of $L_{n-1}^{(\delta+1)}(z;q)$ and
$L_n^{(\delta+1)}(z;q)$ are also considered.
\end{abstract}

\noindent {\it{MSC:}} 33C05; 33C45; 33D15; 33D45

\vspace{.2in} \noindent {\it {Keywords:}} Basic hypergeometric
series, contiguous relations, quasi-orthogonal polynomials,
$q$-Laguerre polynomials, little $q$-Jacobi polynomials, $q$-Meixner polynomials, Al-Salam-Carlitz I polynomials,
interlacing of zeros.

\section{Introduction}\label{Section-1}

A sequence $\{P_n\}_{n=0}^\infty$ of real polynomials of exact degree $n$, $n=0,1,\dots,$ is orthogonal with respect to a positive weight function $w(x)$ on an interval $[a,b]$ if
$$\int_a^b x^k P_n(x)w(x)\; dx\begin{cases}
=0 \quad \textrm{for} \quad k=0,1,\ldots, n-1\\\neq 0\quad \textrm{for} \quad k=n.\end{cases}$$
A well-known consequence of orthogonality is that the $n$ zeros of $P_n(x)$ are real, simple and lie in the open interval $(a,b)$.
The zeros of $P_n$ depart from the orthogonality interval in a specific way when the parameters are changed to values where the polynomials are no longer orthogonal and this phenomenon can be explained in terms of the concept of quasi-orthogonality.
A polynomial sequence $\{R_n\}_{n=0}^{\infty}$, $\deg{R_n}=n$, $n\ge r$ is quasi-orthogonal of order $r$ where $n,r\in \nn$ with respect to $w(x)>0$ on $[a,b]$ if $$\int_a^b x^k R_n(x)w(x)\; dx \begin{cases} =0 \quad \textrm{for} \quad k=0,1,\ldots ,n-r-1\\ \neq 0 \quad \textrm{for} \quad k=n-r.\end{cases}$$
Shohat \cite{Shohat} showed that, if $\{P_n\}_{n=0}^\infty$ is a family of orthogonal polynomials with respect to $w(x)>0$ on $[a,b]$, then $\{R_n\}_{n=0}^{\infty}$ is quasi-orthogonal of order $r$ on $[a,b]$ with respect to $w(x)$ if and only if
    there exist constants $c_{n,i}$, $i=0,\ldots,n$ and $c_{n,0}c_{n,r}\neq0$ such that \[R_n(x)=\begin{cases}c_{n,0}P_{n}(x)+c_{n,1}P_{n-1}(x)+\dots+c_{n,r}P_{n-r}(x), \quad n\in\{r,r+1,\dots\}\\
    c_{n,0}P_n(x)+c_{n,1}P_{n-1}
    (x)+\dots+c_{n,n}P_0(x), \quad n\in\{0,\dots,r-1\}.\end{cases}\]
If $R_n$ is quasi-orthogonal of order $r$ on $[a, b]$ with respect to a positive weight function,
    then at least $(n -r)$ distinct zeros of $R_n$ lie in the interval $(a, b)$ (cf. \cite{BrezinskiDR,Shohat}).

\medskip \noindent The notion of quasi-orthogonal polynomials of order $1$ was introduced and studied by Riesz \cite{Riesz}. Quasi-orthogonality of order $2$ was introduced by Fej\'{e}r \cite{Fejer} but it was Shohat \cite{Shohat} and then Chihara
\cite{Chihara}, who generalized the concept of
quasi-orthogonality for any order. In \cite{Chihara}, Chihara investigated extremal properties and location of zeros of quasi-orthogonal polynomials and showed that such polynomials satisfy a three-term recurrence relation with polynomial coefficients. Other classical references on
quasi-orthogonality include the works by Dickinson
\cite{Dickinson}, Brezinski \cite{Brezinski}, Draux
\cite{Draux1,Draux3} and Ronveaux \cite{Ronveaux}. Properties of quasi-orthogonal polynomials and their zeros with respect to classical weights were studied in \cite{BrezinskiDR,BrezinskiDR-2019,Driver-2017, Driver-2018, DJ, DM1, DM3, Johnston-2016,JT,Joulak}.
Recent contributions on the topic may be found in the work by Tcheutia et al. \cite{Alta-SIGMA18} on quasi-orthogonality of classical polynomials on $q$-linear and $q$-quadratic
lattices and by Ismail and Wang \cite{IW} on a general theory for quasi-orthogonal polynomials, specifically their differential equations, discriminants and electrostatics.

\medskip \noindent Applications of zeros of quasi-orthogonal polynomials to interpolation theory, quadrature and approximation theory can be found in \cite{Brezinski-2010, bu, Zhang, Y1,Y2,Y3}, and references therein, while applications of algebraic and spectral properties of quasi-orthogonal polynomials to problems in quantum radiation are described in \cite{Zarzo}.

\medskip \noindent In this paper, we consider the quasi-orthogonality of some families of basic hypergeometric polynomials that do not appear in the $q$-Askey scheme of hypergeometric orthogonal polynomials. A basic hypergeometric function, a class of functions introduced and studied by Heine \cite{Heine}, is a power series in one complex variable $z$ with coefficients which depend, apart from $q$, on $r$ complex numerator
 parameters $\alpha_1,\alpha_2,\cdots,\alpha_r$ and $s$ complex
 denominator parameters $\beta_1,\beta_2,\cdots,\beta_s$. The basic hypergeometric series is defined as follows:
\begin{equation*}
_r\phi_s\left(\begin{matrix}\alpha_1,~\alpha_2,\cdots,
\alpha_r\\\beta_1,\beta_2,\cdots,\beta_s\end{matrix};q,z\right)=
\sum_{k=0}^{\infty}\frac{(\alpha_1;q)_k(\alpha_2;q)_k\cdots(\alpha_r;q)_k}{(q;q)_k(\beta_1;q)_k(\beta_2;q)_k\cdots(\beta_s;q)_k}\left\{(-1)^k
q^{\binom{k}{2}}\right\}^{1+s-r}z^k,
\end{equation*}
with $\binom{k}{2} = k(k-1)/2$ where $q\neq 0$ when $r>s+1$ and
$(a;q)_k$ is the $q$-shifted factorial defined by
\begin{align*}
(a;q)_k & =
\begin{cases}1;~&k=0,\\ (1-a)(1-aq)(1-aq^2)\cdots(1-aq^{k-1}) ; & k\in \mathbb{N}:=\{1,2,3,\cdots\}\end{cases}\\
\text{and}\ (a;q)_{\infty}&=\prod_{k=0}^{\infty}(1-aq^k).
\end{align*}
When one of the numerator parameters, say $\alpha_1$ is equal to
$q^{-n},$ then the series terminates and the function is a
polynomial of degree $n.$  We will assume throughout this paper
that $0<q<1$.

\medskip \noindent$q$-Laguerre polynomials, defined by (cf. \cite[pp. 522, Eqn. (14.21.1)]{Koekoek}) $$L_n^{(\delta)}(z;q)=\frac{\left(q^{\delta+1};\,q\right)_n}{\left(q;\,q\right)_n}\ _1\phi_1\left(\begin{matrix}q^{-n}\\q^{\delta+1}\end{matrix};q,-q^{n+\delta+1}z\right),$$  are orthogonal with respect to the weight function $\displaystyle{\frac{z^{\delta}}{(-z;q)_\infty}}$ on the interval  $(0,\infty)$ for $\delta>-1$ and hence we have
\begin{equation*}
\int_0^\infty
z^{j}{L}_{n}^{(\delta)}(z,q)\frac{z^{\delta}}{(-z;q)_\infty}dz=0;\quad
j=0,\cdots,n-1.
\end{equation*} $q$-Laguerre polynomials satisfy the relation (cf. \cite[Eqn. (4.12)]{Moak-1981}),
\begin{equation}\label{s1e2}
{L}_{n}^{(\delta-1)}(z,q)=q^{-n}{L}_{n}^{(\delta)}(z,q)-q^{-n}{L}_{n-1}^{(\delta)}(z,q).
\end{equation} Hahn \cite{Hahn} introduced the little $q$-Jacobi polynomials (cf. \cite[pp. 482, Eqn. (14.12.1)]{Koekoek}) given by $$p_n(z;\,a,\,b|q)=\ _2\phi_1\left(\begin{matrix}q^{-n},abq^{n+1}\\aq\end{matrix}q,\ qz\right).$$ Little $q$-Jacobi polynomials are discrete orthogonal with respect to the weight function $\displaystyle{\frac{a^zq^z(bq;q)_z}{(q;q)_z}}$ for \newline $z\in(0,1)$ , $0<aq <1$ and $bq < 1$.

\medskip \noindent $q$-Meixner polynomials (cf. \cite[pp. 488, Eqn. (14.13.1)]{Koekoek}) given by $$M_n(q^{-z};\,b,\,c;q)=\ _2\phi_1\left(\begin{matrix}q^{-n},q^{-z}\\bq\end{matrix};q,\ -{q^{n+1}\over c}\right),$$ are discrete orthogonal with respect to the weight function $\displaystyle{{(bq;q)_z c^z q^{\binom{z}{2}} \over (-bcq;q)_z (q;q)_z}}$ on the interval  $(0,\infty)$ for $0<bq <1$ and $c>0.$

\medskip \noindent
Al-Salam-Carlitz I polynomials (cf. \cite[pp. 534, Eqn. (14.24.1)]{Koekoek}) given by $$U_n^a (z;q)=\ (-a)^n q^{\binom{n}{2}}\  _2\phi_1\left(\begin{matrix}q^{-n},z^{-1}\\0\end{matrix};q,\ {qz\over a}\right),$$ are orthogonal with respect to the weight $\displaystyle{\left(qz,{qz\over a};q\right)_\infty}$ on $(a,1)$ for $a<0.$

\medskip \noindent In this paper we will use the orthogonality of $q$-Laguerre, little $q$-Jacobi, $q$-Meixner and Al-Salam-Carlitz I polynomials respectively to investigate the quasi-orthogonality of the basic hypergeometric polynomials
\begin{equation}\label{1}\phi_n^{(k)}(z):=\
_2\phi_2\left(\begin{matrix}q^{-n},q^{\gamma+k}\\
q^{\delta+1},q^\gamma\end{matrix};q,-q^{n+\delta+1}z\right),\end{equation}
\begin{equation}\label{2}\Phi_n^{(k)}(z):=\
_3\phi_2\left(\begin{matrix}q^{-n},q^{\gamma+k},abq^{n+1}\\
aq,q^\gamma\end{matrix};q,qz\right),\end{equation}
\begin{equation}\label{3} \varphi_n^{(k)}(z):=\
    _3\phi_2\left(\begin{matrix}q^{-n},q^{\gamma+k},q^{-z}\\
    bq,q^\gamma\end{matrix};q,-{q^{n+1}\over c}\right)\end{equation}
    and
    \begin{equation}\label{5}
    \varPhi_n^{(k)}(z):=\ _3\phi_2\left(\begin{matrix}q^{-n},q^{\gamma+k},z^{-1}\\0,q^\gamma\end{matrix};q,{qz\over a}\right).
    \end{equation}

\medskip \noindent In  Section \ref{Section-2}, we prove the $q$-extension of a result due to Johnston and Jordaan \cite{Johnston-2015} and use contiguous relations satisfied by $q$-Laguerre polynomials to prove quasi-orthogonality and interlacing properties of the zeros of quasi-orthogonal $\phi_n^{(1)}(z)$
and $\phi_n^{(2)}(z)$ polynomials with those of monic $q$-Laguerre polynomials. In Section \ref{Section-3} we consider the quasi-orthogonality of $\Phi_n^{(k)}(z),$ $\varphi_n^{(k)}(z)$ and $\varPhi_n^{(k)}(z)$, proving that polynomials $\Phi_n^{(1)}(z)$ and $\Phi_n^{(2)}(z)$ are quasi-orthogonal of order $1$ and $2$ respectively and establish the interlacing of the zeros of $\Phi_n^{(1)}(z)$ and $\Phi_n^{(2)}(z)$ with those of monic little $q$-Jacobi polynomials. Quasi-orthogonality and interlacing of zeros of $q$-Laguerre polynomials are investigated in Section \ref{Section-4}.

\section{Quasi-orthogonality and zeros of $\boldsymbol{_2\phi_2}$ polynomials}\label{Section-2}
The following theorem which is analogous to \cite[Theorem
2.1]{Johnston-2015} shows that the polynomials $_r\phi_s$ of
degree $n$ can be expressed as a linear combination of polynomials
of lesser degree with shifted parameters.
\begin{theorem}\label{s2t1}
Let $n\in \mathbb N;~k=1,2,\cdots,n-1;~|q|<1$ and
$\alpha_1,~\alpha_2,\cdots,
\alpha_r,~\beta_1,\beta_2,\cdots,\beta_s\in\mathbb R$ with
$\alpha_1,~\alpha_2,\cdots,
\alpha_r,~\beta_1,\beta_2,\cdots,\beta_s\notin\{0,-1,-2,\cdots,
-n\}$ and $\alpha_2\notin\{0,1,\cdots,k-1\}$. Then
\begin{align}\label{s2t1e1}
    {}_r \phi_s\left(\begin{matrix}&q^{-n},& &q^{\alpha_2+1},& &\alpha_3,& &\cdots,& &\alpha_r\\&\beta_1,&
&\beta_2,& &\beta_3,& &\cdots,& &\beta_s\end{matrix};q,
z\right)&=A_k\ {}_r \phi_s\left(\begin{matrix}&q^{-n+k},&
&q^{\alpha_2-k+1},& &\alpha_3,& &\cdots,& &\alpha_r\\&\beta_1,&
&\beta_2,& &\beta_3,& &\cdots,& &\beta_s\end{matrix};q,
z\right)+\cdots\nonumber\\ &+A_1\ {}_r
\phi_s\left(\begin{matrix}&q^{-n+1},& &q^{\alpha_2-k+1},&
&\alpha_3,& &\cdots,& &\alpha_r\\&\beta_1,& &\beta_2,& &\beta_3,&
&\cdots,& &\beta_s\end{matrix};q, z\right)\nonumber\\ &+A_0\ {}_r
\phi_s\left(\begin{matrix}&q^{-n},& &q^{\alpha_2-k+1},&
&\alpha_3,& &\cdots,& &\alpha_r\\&\beta_1,& &\beta_2,& &\beta_3,&
&\cdots,& &\beta_s\end{matrix};q, z\right),
\end{align}
for non-zero constants $A_i,i=0,1,2,\cdots,k$ depending on $n,
\alpha_2$ and $q$.
\end{theorem}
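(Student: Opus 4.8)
The plan is to mirror the classical argument of Johnston and Jordaan by matching the series coefficient of each power of $z$. Expanding both sides of \eqref{s2t1e1} as basic hypergeometric series, the coefficient of $z^m$ carries in every summand the common factor $\dfrac{(\alpha_3;q)_m\cdots(\alpha_r;q)_m}{(q;q)_m(\beta_1;q)_m\cdots(\beta_s;q)_m}\big\{(-1)^m q^{\binom m2}\big\}^{1+s-r}$, which cancels from both sides. Hence \eqref{s2t1e1} is equivalent to the scalar relation
\begin{equation*}
(q^{-n};q)_m\,(q^{\alpha_2+1};q)_m=\sum_{j=0}^{k}A_j\,(q^{-n+j};q)_m\,(q^{\alpha_2-k+1};q)_m,\qquad m=0,1,2,\dots
\end{equation*}
In particular the parameters $\alpha_3,\dots,\alpha_r,\beta_1,\dots,\beta_s$ and the value of $1+s-r$ play no role beyond guaranteeing that every series in sight is well defined; only the two numerator parameters $q^{-n}$ and $q^{\alpha_2+1}$ are active.

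Next I would recast this as a polynomial identity in a single variable. Using the elementary product rearrangements $\dfrac{(q^{\alpha_2+1};q)_m}{(q^{\alpha_2-k+1};q)_m}=\dfrac{(q^{\alpha_2-k+1}x;q)_k}{(q^{\alpha_2-k+1};q)_k}$ and $\dfrac{(q^{-n+j};q)_m}{(q^{-n};q)_m}=\dfrac{(q^{-n}x;q)_j}{(q^{-n};q)_j}$, which hold identically with $x:=q^{m}$, the scalar relation for every $m$ is equivalent to the single polynomial identity in $x$
\begin{equation*}
\frac{(Ax;q)_k}{(A;q)_k}=\sum_{j=0}^{k}A_j\,\frac{(Bx;q)_j}{(B;q)_j},\qquad A:=q^{\alpha_2-k+1},\quad B:=q^{-n}.
\end{equation*}
Since $(Bx;q)_j$ has degree $j$ in $x$, the polynomials $(Bx;q)_0,\dots,(Bx;q)_k$ form a basis of the space of polynomials of degree at most $k$, so $A_0,\dots,A_k$ exist and are uniquely determined by the resulting triangular system; because the left-hand side has degree exactly $k$, the coefficient $A_k$ is automatically nonzero.

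To obtain the $A_j$ in closed form --- and hence to settle the non-vanishing of all of them --- I would establish the connection formula
\begin{equation*}
(Ax;q)_k=\sum_{j=0}^{k}\binom{k}{j}_q\,(A/B)^{j}\,(A/B;q)_{k-j}\,(Bx;q)_j,
\end{equation*}
where $\binom{k}{j}_q:=\dfrac{(q;q)_k}{(q;q)_j(q;q)_{k-j}}$ denotes the $q$-binomial coefficient. Expanding both $(Ax;q)_k$ and each $(Bx;q)_j$ in powers of $x$ by the $q$-binomial theorem and using $\binom{k}{j}_q\binom{j}{i}_q=\binom{k}{i}_q\binom{k-i}{j-i}_q$, the comparison of the coefficient of $x^i$ collapses to the summation $\sum_{l=0}^{N}\binom{N}{l}_q t^{l}(t;q)_{N-l}=1$ (with $N=k-i$, $t=A/B$), a standard $q$-binomial identity that is readily proved by induction from the $q$-Pascal rule. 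Matching this with the displayed polynomial identity yields
\begin{equation*}
A_j=\frac{(q^{-n};q)_j}{(q^{\alpha_2-k+1};q)_k}\binom{k}{j}_q\,q^{j(n+\alpha_2-k+1)}\,(q^{n+\alpha_2-k+1};q)_{k-j}.
\end{equation*}

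Finally I would confirm the non-vanishing factor by factor, and this is exactly where the two hypotheses on $\alpha_2$ enter. The Gaussian binomial and the power of $q$ are positive; $(q^{-n};q)_j\neq0$ because its factors carry the exponents $-n,\dots,-n+j-1$, all strictly negative since $j-1\le k-1\le n-2$; the denominator $(q^{\alpha_2-k+1};q)_k$ is nonzero precisely because $\alpha_2\notin\{0,1,\dots,k-1\}$; and $(q^{n+\alpha_2-k+1};q)_{k-j}\neq0$ precisely because $\alpha_2\notin\{0,-1,\dots,-n\}$, its vanishing requiring $\alpha_2\in\{j-n,\dots,k-1-n\}\subseteq\{-n,\dots,-2\}$. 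Hence every $A_j\neq0$, as claimed. The only substantive step is the third one --- identifying the connection coefficients and reducing their sum rule to the $q$-binomial identity --- while the remaining steps are bookkeeping. An alternative to it is induction on $k$ from the base case $k=1$, in which $A_0$ and $A_1$ solve two linear equations explicitly, but tracking the non-vanishing of all the $A_j$ is cleaner through the closed form above.
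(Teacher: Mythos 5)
Your proof is correct, but it takes a genuinely different route from the paper. The paper proves \eqref{s2t1e1} by taking the two-term contiguous relation \eqref{s2t1e2} (Krattenthaler's (C28) with $a=q^{-n}$, $b=q^{\alpha_2}$) and iterating it $k$ times, so that each $A_i$ emerges as a sum of products of the elementary coefficients $a_n^{\alpha_2},b_n^{\alpha_2}$; the excluded values of $\alpha_2$ are read off from the accumulated denominators $(1-q^{\alpha_2})\cdots(1-q^{\alpha_2-k+1})$. You instead strip off the spectator parameters by comparing coefficients of $z^m$, convert the resulting scalar relations into the single connection problem $(Ax;q)_k=\sum_j c_j (Bx;q)_j$ with $x=q^m$, and solve it in closed form via the identity $\sum_{l=0}^{N}\binom{N}{l}_q t^l(t;q)_{N-l}=1$. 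Your route is longer but buys something the paper's proof does not actually deliver: an explicit formula for each $A_j$, from which the non-vanishing of \emph{all} the $A_j$ (not just $A_k$) and the precise role of both exclusion hypotheses on $\alpha_2$ are transparent, whereas in the paper's iteration the intermediate coefficients are sums of products whose non-vanishing is asserted rather than checked. One small point you should make explicit: the passage from ``the scalar relation for every $m$'' to ``a single polynomial identity in $x$'' involves dividing by $(q^{-n};q)_m$ and $(q^{\alpha_2-k+1};q)_m$, which is legitimate only for $m\le n$ (and under the stated hypotheses on $\alpha_2$); for $m>n$ both sides of the scalar relation vanish identically, and the $n+1\ge k+2$ surviving evaluation points $x=q^0,\dots,q^n$ are still more than enough to force the degree-$k$ polynomial identity, so the argument stands.
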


\begin{proof}
The contiguous relation (cf. \cite[pp. 25, Eqn. (C28)]{CK1}) for
$_r\phi_s$ functions with $a=q^{-n}$ and $b=q^{\alpha_2}$ becomes
\begin{multline}
_r\phi_s\left(\begin{matrix}q^{-n},q^{\alpha_2},\alpha_3,\cdots,\alpha_r\\\beta_1,\beta_2,\cdots,\beta_s\end{matrix};q,z\right)=
\frac{q^{-n}(1-q^{\alpha_2})}{(q^{-n}-q^{\alpha_2})}\ _r\phi_s\left(\begin{matrix}q^{-n},q^{\alpha_2} q,\alpha_3,\cdots,\alpha_r\\\beta_1,\beta_2,\cdots,\beta_s\end{matrix};q,z\right)\\
+\frac{(1-q^{-n})q^{\alpha_2}}{(-q^{-n}+q^{\alpha_2})}\
_r\phi_s\left(\begin{matrix}q^{-n} q,
q^{\alpha_2},\alpha_3,\cdots,\alpha_r\\\beta_1,\beta_2,\cdots,\beta_s\end{matrix};q,z\right),\nonumber
\end{multline}
which, on re-ordering of terms, can be written as
\begin{multline}
_r\phi_s\left(\begin{matrix}q^{-n},q^{\alpha_2+1},
\alpha_3,\cdots,\alpha_r\\\beta_1,\beta_2,\cdots,\beta_s\end{matrix};q,z\right)=
a_n^{\alpha_2}\ _r\phi_s\left(\begin{matrix}q^{-n+1},q^{\alpha_2}, \alpha_3,\cdots,\alpha_r\\\beta_1,\beta_2,\cdots,\beta_s\end{matrix};q,z\right)~~~~~~~~~~~~~~~~~~~~~~~~~~~~~~~~~~~~~~~~~~\\
+b_n^{\alpha_2}\ _r\phi_s\left(\begin{matrix}q^{-n},q^{\alpha_2},
\alpha_3,\cdots,\alpha_r\\\beta_1,\beta_2,\cdots,\beta_s\end{matrix};q,z\right),\label{s2t1e2}
\end{multline}
where
$$a_n^{\alpha_2}=\frac{q^{n+\alpha_2}(1-q^{-n})}{1-q^{\alpha_2}}\
\text{and}\
b_n^{\alpha_2}=\frac{q^n(q^{-n}-q^{\alpha_2})}{1-q^{\alpha_2}}.$$
For ease of notation, let us write \eqref{s2t1e2} as
\begin{equation}\label{s2t1e3}
_r\phi_s(q^{-n},q^{\alpha_2+1};z)=a_n^{\alpha}\ _r\phi_s(q^{-n+1},q^{\alpha_2};z)+b_n^{\alpha}\ _r\phi_s(q^{-n},q^{\alpha_2};z).
\end{equation}
Apply \eqref{s2t1e3} to both the polynomials on the right hand
side of \eqref{s2t1e3} to obtain
\begin{multline}
_r\phi_s(q^{-n},q^{\alpha_2+1};z)=a_n^{\alpha_2}a_{n-1}^{\alpha_2-1}\ _r\phi_s(q^{-n+2},q^{\alpha_2-1};z)\\
+\left(a_n^{\alpha_2}b_{n-1}^{\alpha_2-1}+b_n^{\alpha_2}a_n^{\alpha_2-1}\right)\ _r\phi_s(q^{-n+1},q^{\alpha_2-1};z)
+b_n^{\alpha_2}b_n^{\alpha_2-1}\ _r\phi_s(q^{-n},q^{\alpha_2-1};z).\label{s2t1e4}
\end{multline}

\noindent Similarly, applying \eqref{s2t1e3} to the polynomials on the right
hand side of \eqref{s2t1e4} up to $(k-1)$ times, yields
\eqref{s2t1e1}. For each $A_i$, $i=1,\dots,k$, we have a
denominator $(1-q^{\alpha_2})(1-q^{\alpha_2-1})\cdots
(1-q^{\alpha_2-k+1})$ and hence we assume
$\alpha_2\notin\{0,1,\cdots,k-1\}.$
\end{proof}
\noindent To study the quasi-orthogonality and interlacing
properties of $\phi_n^{(k)}(z)$, $\Phi_n^{(k)}(z)$, $\varphi_n^{(k)}(z)$ and $\varPhi_n^{(k)}(z)$ defined by
\eqref{1}, \eqref{2}, \eqref{3} and \eqref{5} we will make use of the contiguous
relations proved in the following lemma.

\begin{lemma}\hspace{1cm}\\ \label{s2L1}
The following relations hold true:
\begin{enumerate}
  \item[(i)] The $q$-Laguerre polynomials  $L_n^{(\delta)}(z;q)$ for $n\in\mathbb{N}$ and $\delta>-1$ satisfy the relations
  \begin{equation}\label{s2L1e1}
    (1+z)L_n^{(\delta)}(zq;\,q)=\frac{q^{n+1}-1}{q^{n+\delta+1}}L_{n+1}^{(\delta)}(z;\,q)+\frac{1}{q^{n+\delta+1}}L_{n}^{(\delta)}(z;\,q)
  \end{equation}
  and
  \begin{equation}
  L_{n}^{(\delta)}(z;q)=\frac{1}{q^n}L_{n}^{(\delta)}(zq;q)-\frac{1-q^{n+\delta}}{q^n}L_{n-1}^{(\delta)}(zq;q).\label{A}
  \end{equation}

  \item[(ii)] The little $q$-Jacobi polynomials $p_n(z;\,a,\,b|q)$ for $n\in\mathbb{N},\ 0<aq<1$ and $bq<1,$ satisfy the relations
  \begin{equation}\label{s2L1e2}
     (1-zqb)p_n(z;a,\,bq|q)={bq^{n+1}(aq^{n+1}-1)\over abq^{2n+2}-1}p_{n+1}(z;a,b|q)+{bq^{n+1}-1\over abq^{2n+2}-1}p_{n}(z;a,b|q)
  \end{equation}
  and
   \begin{equation}\label{s2L1e2a}
     p_n(z;\,a,\,b|q)={1-abq^{n+1}\over 1-abq^{2n+1}} p_{n}(z;\,a,\,bq|q)+{abq^{n+1}(1-q^n)\over 1-abq^{2n+1}} p_{n-1}(z;\,a,\,bq|q).
  \end{equation}
  \item[(iii)] The $q$-Meixner polynomials $M_n\left(q^{-z};\,b,\,c;q\right)$ for $n\in\mathbb{N},\ 0<bq<1$ and $c>0,$ satisfy the relation
    \begin{equation}\label{s2L1e2-1}
    \left(bc+q^{-z}\right)M_n\left(q^{-z};\,b,\,{c\over q};q\right)=
    {c(bq^{n+1}-1) \over q^{n+1}} M_{n+1}\left(q^{-z};\,b,\,c;q\right)+{q^{n+1}+c \over q^{n+1}}M_n\left(q^{-z};\,b,\,c;q\right).
    \end{equation}
\end{enumerate}
\end{lemma}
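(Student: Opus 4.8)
The plan is to prove each of the three families of contiguous relations in Lemma \ref{s2L1} by the same underlying mechanism: rewrite the hypergeometric representation of each polynomial explicitly as a finite sum, and then verify the identity by comparing coefficients of like powers of the relevant variable. For the $q$-Laguerre relations \eqref{s2L1e1} and \eqref{A}, I would start from the representation $L_n^{(\delta)}(z;q)=\frac{(q^{\delta+1};q)_n}{(q;q)_n}\,{}_1\phi_1(q^{-n};q^{\delta+1};q,-q^{n+\delta+1}z)$ and expand the ${}_1\phi_1$ as $\sum_{j=0}^n \frac{(q^{-n};q)_j}{(q;q)_j(q^{\delta+1};q)_j}(-1)^j q^{\binom{j}{2}}(-q^{n+\delta+1}z)^j$. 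The factor $(1+z)$ in \eqref{s2L1e1} and the substitution $z\mapsto zq$ only rescale these coefficients by explicit powers of $q$, so the identity reduces to a polynomial identity in $z$ whose coefficient of each $z^j$ is a $q$-shifted-factorial identity that telescopes. I expect these to follow from the standard relations among $(q^{-n};q)_j$, $(q^{-n-1};q)_j$, and $(q^{-n+1};q)_j$ together with the known three-term recurrence for $q$-Laguerre polynomials.

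For the little $q$-Jacobi relations \eqref{s2L1e2} and \eqref{s2L1e2a}, the governing feature is the shift $b\mapsto bq$ in the second parameter, which changes the numerator parameter $abq^{n+1}$ and the weight. I would expand $p_n(z;a,bq|q)={}_2\phi_1(q^{-n},abq^{n+2};aq;q,qz)$ as a finite sum and compare with the expansions of $p_{n+1}(z;a,b|q)$ and $p_n(z;a,b|q)$. The natural tool here is a contiguous relation of Gauss type for ${}_2\phi_1$ in the numerator parameter $abq^{n+1}$; the coefficients in \eqref{s2L1e2} and \eqref{s2L1e2a}, namely the ratios involving $abq^{2n+2}-1$ and $abq^{2n+1}$, are exactly the kind produced by such a three-term contiguity, so I would identify the correct known contiguous relation from \cite{CK1} (analogous to the one used in the proof of Theorem \ref{s2t1}) and match constants. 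The factor $(1-zqb)$ on the left of \eqref{s2L1e2} plays the role of absorbing the shift in the weight, and I would check that multiplying by $(1-zqb)$ converts the $b$-shifted polynomial into a combination of unshifted ones by a coefficient-matching argument.

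For the $q$-Meixner relation \eqref{s2L1e2-1}, the shift is $c\mapsto c/q$, and the prefactor $(bc+q^{-z})$ again compensates for the weight change. I would expand $M_n(q^{-z};b,c/q;q)={}_2\phi_1(q^{-n},q^{-z};bq;q,-q^{n+2}/c)$ termwise and compare the coefficient of each power of $q^{-z}$ (treating $q^{-z}$ as the independent variable, since $q$-Meixner polynomials are polynomials of degree $n$ in $q^{-z}$) against the corresponding expansions of $M_{n+1}$ and $M_n$ at parameter $c$. This should again reduce to a verifiable identity among $q$-shifted factorials once the common $(q^{-n};q)_j$ structure is factored out.

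The main obstacle I anticipate is the $q$-Meixner relation \eqref{s2L1e2-1}, because the variable $q^{-z}$ appears both inside the polynomial argument and explicitly in the prefactor $bc+q^{-z}$, so multiplying through raises the degree and the coefficient-matching must carefully track how the degree-$n$ polynomial at parameter $c/q$ maps onto degree-$(n{+}1)$ and degree-$n$ polynomials at parameter $c$. Getting the normalization constants exactly right — ensuring the $c$-dependence in the prefactor cancels correctly — is the delicate computation. In all three parts the conceptual step (choosing the right known contiguous relation and the right compensating prefactor) is straightforward; the risk is purely in the bookkeeping of powers of $q$, which I would organize by isolating the coefficient of the top-degree term to fix the leading constant first and then verifying the lower-order coefficients telescope.
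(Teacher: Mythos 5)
Your strategy --- treat each stated identity as a polynomial identity, expand every basic hypergeometric sum, and match coefficients of all powers of $z$ (or $q^{-z}$) --- is logically sound and would succeed, since each coefficient comparison reduces to a checkable identity of rational functions in $q$ and the parameters. But it is a genuinely different route from the paper's. The paper does not compare all coefficients: for \eqref{s2L1e1} it first uses orthogonality, writing $(1+z)L_n^{(\delta)}(zq;q)=\sum_{j=0}^{n+1}a_jL_j^{(\delta)}(z;q)$ and observing that under the substitution $x=zq$ the measure $(1+z)\,z^{\delta}\,(-z;q)_\infty^{-1}\,dz$ becomes $q^{-\delta-1}\,x^{\delta}\,(-x;q)_\infty^{-1}\,dx$, so that $a_j=0$ for $j\le n-1$; only the two surviving constants $a_{n+1},a_n$ are then fixed by comparing the coefficients of $z^{n+1}$ and $z^{n}$. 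The remaining relations are obtained by combining contiguous relations already in the literature (Koekoek--Lesky--Swarttouw, Moak, Kim et al., Tcheutia et al.), with \eqref{s2L1e2a} cited outright. Your route buys self-containedness and uniformity at the cost of much heavier bookkeeping; in particular, for \eqref{s2L1e2-1} the variable enters through the factors $(q^{-z};q)_k$, which mix monomial powers of $q^{-z}$, so you would do well to compare coefficients in the basis $\{(q^{-z};q)_k\}_k$ rather than in the monomials $q^{-kz}$, and likewise to note that \eqref{s2L1e2} is a \emph{mixed} relation (it shifts both $q^{-n}$ and $abq^{n+1}$ and carries the $z$-dependent prefactor $1-zqb$), so no single Gauss-type contiguous relation from the Krattenthaler list will deliver it without the compensating-factor argument you describe. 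The paper's route buys brevity and, more importantly, an explanation of \emph{why} only two terms appear on the right-hand side of each relation --- the structural fact that the quasi-orthogonality arguments in the rest of the paper exploit.
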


\begin{proof}
We begin by proving that for a fixed $n$,
$(1+z)L_n^{(\delta)}(zq;\,q)$ is a linear combination of
$L_{n+1}^{(\delta)}(z;\,q)$ and $L_{n}^{(\delta)}(z;\,q)$. Let $n$
be a positive integer, $\delta>-1$ and
$w(z;q)=\displaystyle{\frac{z^q}{(-z;q)_{\infty}}}$. The polynomial
$(1+z)L_n^{(\delta)}(zq;\,q)$ of degree $n+1$, can be written as
\[(1+z)L_{n}^{(\delta)}(zq;\,q)=\sum_{k=0}^{n+1}a_jL_{j}^{(\delta)}(z;\,q), ~~a_{n+1}\neq0,\]
where $a_j;$ $j=0,1,\dots,n+1$, is given by
\begin{equation}\label{s2L1e3}
a_j\int_{0}^{\infty}w(z;\,q)\left(L_j^{(\delta)}(z;\,q)\right)^2dz
=\int_{0}^{\infty}w(z;\,q)(1+z)L_{n}^{(\delta)}(zq;\,q)L_j^{(\delta)}(z;\,q)dz,
\end{equation}
since $(L^{(\delta)}_{j}(z;q))_{j=0}^{\infty}$ is orthogonal with
respect to the weight function $w(z;q)$ on the interval
$(0,\infty).$ Using the change of variable $x=zq$ and upon
substitution of the weight function, \eqref{s2L1e3} becomes
\begin{equation*}
a_j\int_{0}^{\infty}w(z;\,q)\left(L_j^{(\delta)}(z;\,q)\right)^2dz
=
q^{-\delta-1}\int_{0}^{+\infty}w(x;\,q)L_n^{(\delta)}(x;\,q)L_j^{(\delta)}\left({x\over
q};\,q\right)dx.
\end{equation*}
 Therefore, $a_j=0$ for $j=0,1,\dots,n-1$ and it follows that
 \begin{equation}\label{s2L1e4}
   (1+z)L_n^{(\delta)}(zq;q)=a_{n+1}L_{n+1}^{(\delta)}(z;q)+a_nL_n^{(\delta)}(z;q),
 \end{equation}
 where $a_{n+1}$ and $a_n$ are both different from zero.

\medskip \noindent Next, we compute the coefficients $a_{n+1}$ and $a_n$ by expanding both sides of \eqref{s2L1e4} in terms of powers of $z$
  and comparing the coefficients of $z^{n+1}$ and $z^n$ to have respectively
\begin{eqnarray*}
{(q^{-n};\,q)_n\left(q^{n+\delta+1}\right)^nq^{{n\choose 2}}\over
\left(q;\,q\right)_n^2} q^n
=a_{n+1}{(q^{-n-1};\,q)_{n+1}\left(q^{n+\delta+2}\right)^{n+1}q^{n(n+1)\over
2}\over \left(q;\,q\right)_{n+1}^2},
\end{eqnarray*}
and
\begin{eqnarray*}
  &&{\frac { \left( {q}^{\delta+1};q \right)_n  \left( {q}^{-n};q \right)_{n-1}  \left( {q}^{n+\delta+1}
 \right) ^{n-1}{q}^{{n-1\choose 2}}{q}^{n-1}}{
 \left( q;q\right)_n  \left( {q}^{\delta+1};q
 \right)_{n-1}  \left( q;q \right)_{n-1} }}+{\frac { \left( {q}^{-n};q\right)_n  \left( {q}^{n+\delta+1}
 \right) ^{n}{q}^{{n\choose 2}}{q}^{n}}{ \left(
 \left( q;q \right)_n  \right) ^{2}}}\\
&& ={\frac {a_{{n+1}} \left( {q}^{\delta+1};q \right)_{n+1}
 \left( {q}^{-n-1};q \right)_n  \left( {q}^{n+2+\delta} \right) ^{n}{q
}^{{n\choose 2}}}{ \left( q;q \right)_{n+1}  \left(
{q}^{\delta+1};q \right)_n
 \left( q;q \right)_n }}+{\frac {a_{{n}} \left( {q}^{
-n};q \right)_n  \left( {q}^{n+\delta+1} \right)
^{n}{q}^{{n\choose 2} }}{ \left(  \left( q;q \right)_n  \right)
^{2}}}.
   \end{eqnarray*}
Solving the above system of equations for $a_{n+1}$ and $a_n$, we
obtain $a_{n+1}=\displaystyle{q^{n+1}-1\over q^{n+\delta+1}}$ and
$a_{n}=\displaystyle{1\over q^{n+\delta+1}}.$ This completes the proof of the
first equation in Lemma \ref{s2L1}(i). The derivation of the other
equations in the lemma is similar, therefore it suffices to point
out that, once the relations have been obtained, they can easily
be verified by comparing the coefficient of $z^n$ on both sides.
Similarly, suitable applications of \cite[Eqn. (14.21.10)]{Koekoek} and \cite[Eqn. (4.12)]{Moak-1981},
\cite[Eqn. (14.21.8)]{Koekoek} and \cite[Eqn. (4.14)]{Moak-1981}, \cite[Eqn. (9)]{PKRS} and \cite[Eqn. (9b)]{Alta-2018},
and \cite[Eqn. (2.3)]{Kim} and \cite[Eqn. (10c)]{Alta-2018} respectively gives \eqref{s2L1e1}, \eqref{A}, \eqref{s2L1e2} and \eqref{s2L1e2-1}
while \eqref{s2L1e2a} is also proved in \cite[Eqn. (9b)]{Alta-2018}.
\end{proof}

\subsection{Quasi-orthogonality using $\boldsymbol{q}$-Laguerre polynomials}
\begin{theorem}\label{s2t2}
Let $n\in\mathbb N,\ k=1,2,\cdots,n-1,\ \delta,\gamma\in\mathbb
R,\ \delta>-1$ and $\gamma,\ \gamma+k\notin\{0,-1,-2,\cdots,-n\}.$
Then the polynomial $\phi_n^{(k)}(z)$
is quasi-orthogonal of order $k$ on $(0,\infty)$ with respect to the weight function $\displaystyle{\frac{z^{\delta}}{(-zq^k;q)_{\infty}}}$
and has at least $(n-k)$ distinct, real, positive zeros.
\end{theorem}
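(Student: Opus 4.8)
The plan is to use Theorem~\ref{s2t1} to collapse the ${}_2\phi_2$ into a linear combination of $q$-Laguerre polynomials, and then to read off quasi-orthogonality from the orthogonality of a rescaled $q$-Laguerre family together with the characterisation recalled in Section~\ref{Section-1}. First I would apply Theorem~\ref{s2t1} to $\phi_n^{(k)}(z)$ defined in \eqref{1} with the choice $\alpha_2=\gamma+k-1$, so that $q^{\alpha_2+1}=q^{\gamma+k}$ and the downward shift by $k$ carries the second numerator parameter to $q^{\alpha_2-k+1}=q^{\gamma}$. The assumptions $\gamma,\gamma+k\notin\{0,-1,\dots,-n\}$ (which, since $k\le n-1$, also force $\alpha_2=\gamma+k-1\notin\{0,1,\dots,k-1\}$) guarantee that the constants $A_0,\dots,A_k$ delivered by Theorem~\ref{s2t1} are all nonzero. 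In each resulting term the numerator parameter $q^{\gamma}$ now coincides with the denominator parameter $q^{\gamma}$ and cancels, so every summand degenerates to a ${}_1\phi_1$, giving
\[
\phi_n^{(k)}(z)=\sum_{j=0}^{k}A_j\,{}_1\phi_1\!\left(\begin{matrix}q^{-n+j}\\ q^{\delta+1}\end{matrix};q,-q^{n+\delta+1}z\right).
\]

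Next I would recognise each ${}_1\phi_1$ as a $q$-Laguerre polynomial. Writing $-q^{n+\delta+1}z=-q^{(n-j)+\delta+1}(q^{j}z)$ identifies the $j$-th term with a nonzero multiple of $L_{n-j}^{(\delta)}(q^{j}z;q)$, so that
\[
\phi_n^{(k)}(z)=\sum_{j=0}^{k}B_j\,L_{n-j}^{(\delta)}(q^{j}z;q),\qquad B_j=A_j\frac{(q;q)_{n-j}}{(q^{\delta+1};q)_{n-j}}\neq0 .
\]
A change of variable in the $q$-Laguerre orthogonality relation shows that $\{L_m^{(\delta)}(q^{k}z;q)\}_m$ is orthogonal on $(0,\infty)$ with respect to precisely the weight $z^{\delta}/(-zq^{k};q)_\infty$ appearing in the statement. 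The summands sit at the different arguments $q^{j}z$; applying the contiguous relation \eqref{A} of Lemma~\ref{s2L1}(i) exactly $k-j$ times to the $j$-th term raises each argument to the common $q^{k}z$ and produces the expansion $\phi_n^{(k)}(z)=\sum_{m=n-k}^{n}c_{n,m}L_m^{(\delta)}(q^{k}z;q)$, which is exactly the form required by the characterisation of quasi-orthogonality recalled in Section~\ref{Section-1}.

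The low moments can in fact be controlled without the rewriting: since $1/(-zq^{k};q)_\infty=(-zq^{j};q)_{k-j}/(-zq^{j};q)_\infty$ with $(-zq^{j};q)_{k-j}$ a polynomial of degree $k-j$, and since $L_{n-j}^{(\delta)}(q^{j}z;q)$ is orthogonal against $z^{\delta}/(-zq^{j};q)_\infty$, the integral $\int_0^\infty z^{i}L_{n-j}^{(\delta)}(q^{j}z;q)\,z^{\delta}/(-zq^{k};q)_\infty\,dz$ vanishes whenever $i+(k-j)<n-j$, i.e. whenever $i\le n-k-1$. Hence $\int_0^\infty z^{i}\phi_n^{(k)}(z)\,z^{\delta}/(-zq^{k};q)_\infty\,dz=0$ for $i=0,\dots,n-k-1$, and the standard sign-change argument then already forces at least $(n-k)$ distinct zeros of $\phi_n^{(k)}$ in $(0,\infty)$, which are therefore real and positive.

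The main obstacle is to show that the order is \emph{exactly} $k$, i.e. that the $(n-k)$-th moment is nonzero, equivalently that the bottom coefficient $c_{n,n-k}$ does not vanish. The top coefficient $c_{n,n}$ is harmless, since it arises only from the $j=0$ term and equals a nonzero power of $q$ times $A_0(q;q)_n/(q^{\delta+1};q)_n$. By contrast every $j$ contributes to $c_{n,n-k}$, because raising $L_{n-j}^{(\delta)}(q^{j}z;q)$ to argument $q^{k}z$ via \eqref{A} generates a degree-$(n-k)$ term, so one must rule out cancellation among the $k+1$ summands. I would settle this by computing the $(n-k)$-th moment in closed form: substituting the power series of $\phi_n^{(k)}$ and using the $q$-beta type integral $\int_0^\infty z^{s-1}/(-zq^{k};q)_\infty\,dz$ reduces the moment to a single terminating basic hypergeometric sum, which I expect to evaluate, by a $q$-Chu--Vandermonde summation, to an explicit product that is nonzero under the stated conditions. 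Verifying that summation is the crux; once it is in hand, the characterisation of Section~\ref{Section-1} gives quasi-orthogonality of order exactly $k$, and the zero count follows as above.
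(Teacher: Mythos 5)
Your route is essentially the paper's. The paper also applies Theorem~\ref{s2t1} with $r=s=2$, $\alpha_2=\gamma+k-1$ and $z\mapsto -q^{n+\delta+1}z$ to obtain $\phi_n^{(k)}(z)=\sum_{j=0}^{k}A_j\frac{(q;q)_{n-j}}{(q^{\delta+1};q)_{n-j}}L_{n-j}^{(\delta)}(zq^j;q)$, and it then handles the mismatched arguments exactly as in your ``low moments'' paragraph: it multiplies through by $(-z;q)_k=(-z;q)_j(-zq^j;q)_{k-j}$ and iterates \eqref{s2L1e1} to reach $(-z;q)_k\,\phi_n^{(k)}(z)=\sum_{i=0}^{k}g_{k-i}(z)L_{n-i}^{(\delta)}(z;q)$ with $\deg g_{k-i}=k-i$, which is the same bookkeeping as your identity $1/(-zq^{k};q)_\infty=(-zq^{j};q)_{k-j}/(-zq^{j};q)_\infty$ combined with the orthogonality of $L_{n-j}^{(\delta)}(q^jz;q)$ against $z^{\delta}/(-zq^{j};q)_\infty$. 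Your alternative realignment of all summands to the common argument $q^{k}z$ via repeated use of \eqref{A} is a cosmetic variant of the same idea (the paper realigns to argument $z$ instead of $q^kz$). The vanishing of the moments of orders $0,\dots,n-k-1$ and the resulting count of at least $n-k$ distinct positive zeros then follow exactly as you describe.

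The one substantive issue is the one you flag yourself: showing that the $(n-k)$-th moment does not vanish, equivalently that the $k+1$ terms do not conspire to kill the coefficient of $L_{n-k}^{(\delta)}$ in the realigned expansion. You leave this as an unevaluated terminating sum that you ``expect'' a $q$-Chu--Vandermonde summation to settle, so your proposal is incomplete at that point. Be aware, however, that the paper's own proof does no more: after establishing \eqref{s2t2e3} it simply asserts ``$\neq 0$ for $j=n-k$'' without computing or bounding that moment (the $i=0,\dots,k$ terms each contribute a nonzero quantity of a priori varying sign). So in substance your argument matches the published one, and you are, if anything, more candid about where the remaining work -- or an implicit genericity assumption on $\gamma$ -- actually lies.
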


\begin{proof}
    Letting $r=2=s,\ \alpha_2=\gamma+k-1,\ \beta_1=q^{\delta+1},\ \beta_2=q^{\gamma}$ and replacing $z$ by $-q^{n+\delta+1}z$ in (\ref{s2t1e1}), there exist some constants $A_i,\ i\in\{0,1,\cdots,k\}$ such that
    \begin{equation}
    \phi_n^{(k)}(z)=\sum_{j=0}^{k}A_j\frac{(q;q)_{n-j}}{(q^{\delta+1};q)_{n-j}}L_{n-j}^{(\delta)}(zq^j;\,q).\label{s2t2e1}
    \end{equation}
    \noindent Iterating (\ref{s2L1e1}) yields
    \begin{equation}
    (-z;\,q)_jL_{n}^{(\delta)}(zq^j;\,q)= \sum_{i=0}^{j}a_{j,n+i}L_{n+i}^{(\delta)}(z;\,q).\label{s2t2e2}
    \end{equation}
    \noindent Multiplying \eqref{s2t2e1} by $(-z;\,q)_k$ and using \eqref{s2t2e2} as well as the relation $(-z;q)_k=(-z;\,q)_j(-q^jz;\,q)_{k-j},$ $0\leq j\leq k$ (cf. \cite[(I.20), pp. 352]{Gasper-book}), yields
    \begin{equation}\label{s2t2e3}
    (-z;q)_k\,\phi_n^{(k)}(z)=\sum_{i=0}^{k}g_{k-i}(z)L_{n-i}^{(\delta)}(z;q),
    \end{equation}
    where $g_{k-i}(z)=\sum_{l=i}^{k}A_l\displaystyle{(q;\,q)_{n-l}\over (q^{\delta+1};q)_{n-l}}a_{l,n-i}(-zq^l;\,q)_{k-l}$ is a polynomial of degree $k-i$, $i=0,\dots,k$ and $k=0,\dots,n-1$. Therefore, upon multiplication by the weight function $w(z;q)=\displaystyle{\frac{z^{\delta}}{(-z;q)_{\infty}}}$ and the factor $z^j$ on both sides of \eqref{s2t2e3}, and integrating with respect to z over the support $(0,\infty)$, we obtain
    $$
    \int_{0}^{\infty}z^j(-z;q)_k\,\phi_n^{(k)}(z)w(z;\,q)dz\begin{cases}=0;\qquad & j=0,\dots,n-k-1,\\ \neq 0; \quad & j=n-k.\end{cases}
   $$
\end{proof}

\subsubsection{Order 1}

Taking $k=1$ in Theorem \ref{s2t2}, the corresponding polynomial $\phi_n^{(1)}(z)$ is quasi-orthogonal of order $1$ on $(0,\infty)$
and hence $\phi_n^{(1)}(z)$  has at least $(n-1)$ real and distinct positive zeros.
In the next theorem we investigate the location of the $n$ real zeros $\phi_n^{(1)}(z)$  with respect to the endpoints of the orthogonality interval
and with respect to the zeros of monic $q$-Laguerre polynomials of degree $n$ and $n-1$.

\begin{theorem}\label{s2t3}
    Let $n\in\{2,3,\cdots\},$ $\delta>-1,$ $\gamma,\delta\in\mathbb{R}$ and $\gamma\notin\{0,-1,\cdots,-n-1\}.$
    Denote the zeros of $\phi_n^{(1)}(z)$ by $z_{i,n},\ i\in\{1,2,\cdots,n\}$ and those of the monic $q$-Laguerre polynomial
    $\displaystyle{\tilde{L}_n^{(\delta)}(zq;q)}$
    by $x_{i,n},\ i\in\{1,2,\cdots,n\}.$ Then
    \begin{itemize}
        \item[(a)] $z_{i,n}>0;$ $i\in\{1,\cdots,n\}$ if and only if $\gamma<-n$ or $\gamma>0.$
        \item[(b)] $z_{1,n}<0$ and $z_{i,n}>0;i\in\{2,\cdots,n\}$ if and only if $-n<\gamma<0.$
        \item[(c)] The zeros of $\phi_n^{(1)}(z)$ interlace with those of $\tilde{L}_{n}^{(\delta)}(zq;q)$ and $\tilde{L}_{n-1}^{(\delta)}(zq;q)$ as
        \begin{itemize}
            \item[(i)] $x_{i,n}<z_{i,n}<x_{i,n-1}$ for $i\in\{1,\cdots,n-1\}$ and $x_{n,n}<z_{n,n},$
            \item[(ii)] $z_{1,n}<x_{1,n}$ and $x_{i-1,n-1}<z_{i,n}<x_{i,n}$ for $i\in\{2,\cdots,n\}.$
        \end{itemize}
    \end{itemize}
\end{theorem}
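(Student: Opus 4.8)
The plan is to reduce $\phi_n^{(1)}$ to a two-term combination of consecutive monic $q$-Laguerre polynomials evaluated at $zq$, and then to recover both the location of the zeros (parts (a), (b)) and their interlacing (part (c)) from the sign of the single mixing coefficient together with the value $\phi_n^{(1)}(0)=1$.

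First I would put $k=1$ in \eqref{s2t2e1}, which gives $\phi_n^{(1)}(z)=A_0\frac{(q;q)_n}{(q^{\delta+1};q)_n}L_n^{(\delta)}(z;q)+A_1\frac{(q;q)_{n-1}}{(q^{\delta+1};q)_{n-1}}L_{n-1}^{(\delta)}(zq;q)$, where \eqref{s2t1e2} with $\alpha_2=\gamma$ yields $A_0=\frac{1-q^{n+\gamma}}{1-q^{\gamma}}$ and $A_1=\frac{q^{\gamma}(q^{n}-1)}{1-q^{\gamma}}$. Using \eqref{A} to rewrite $L_n^{(\delta)}(z;q)$ through $L_n^{(\delta)}(zq;q)$ and $L_{n-1}^{(\delta)}(zq;q)$, and passing to the monic polynomials $\hat L_m$ proportional to $\tilde L_m^{(\delta)}(zq;q)$ (which have the same positive, simple, interlacing zeros $x_{i,m}$), I obtain
\[
\phi_n^{(1)}(z)=\kappa_n\bigl[\hat L_n(z)+a_n\,\hat L_{n-1}(z)\bigr],\qquad a_n=\frac{(1-q^n)(1-q^{n+\delta})\,q^{-2n-\delta}}{1-q^{n+\gamma}},
\]
with $\kappa_n\neq0$. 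Because $\delta>-1$ and $n\ge2$ force $1-q^{n+\delta}>0$, the numerator of $a_n$ is positive, so $a_n$ has the sign of $1-q^{n+\gamma}$: that is, $a_n>0\iff\gamma>-n$ and $a_n<0\iff\gamma<-n$. (Here I use the $q$-Laguerre leading coefficient $\ell_m=(-1)^m q^{m(m+\delta)}/(q;q)_m$, which also feeds the endpoint computation below.)

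For the interlacing I would run the standard sign count. At a zero $x_{i,n}$ of $\hat L_n$ we have $\phi_n^{(1)}(x_{i,n})=\kappa_n a_n\hat L_{n-1}(x_{i,n})$, and at a zero $x_{i,n-1}$ of $\hat L_{n-1}$ we have $\phi_n^{(1)}(x_{i,n-1})=\kappa_n\hat L_n(x_{i,n-1})$; since consecutive orthogonal polynomials alternate in sign at one another's zeros, the intermediate value theorem places exactly one zero of $\phi_n^{(1)}$ in each of the $n-1$ gaps and one further ``escaping'' zero outside $[x_{1,n},x_{n,n}]$, accounting for all $n$ zeros (so they are all real). When $a_n<0$, i.e. $\gamma<-n$, the escaping zero lies beyond $x_{n,n}$ and the zeros fall as in (c)(i); when $a_n>0$, i.e. $\gamma>-n$, it lies below $x_{1,n}$ and they fall as in (c)(ii).

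It then remains to decide the sign of the escaping zero. The key input is $\phi_n^{(1)}(0)={}_2\phi_2(\cdots;q,0)=1>0$, whence $\operatorname{sign}\bigl(\hat L_n(0)+a_n\hat L_{n-1}(0)\bigr)=\operatorname{sign}\kappa_n$, and a short computation gives $\operatorname{sign}\kappa_n=(-1)^n\operatorname{sign}A_0=(-1)^n\operatorname{sign}\frac{1-q^{n+\gamma}}{1-q^{\gamma}}$. Comparing this with the behaviour of the monic polynomial $\hat L_n+a_n\hat L_{n-1}$ at $-\infty$ (sign $(-1)^n$) shows that $0$ lies to the left of all zeros precisely when $A_0>0$. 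Splitting into the three admissible ranges ($\gamma\neq0,-n$ being excluded): for $\gamma<-n$ the escaping zero is already $>x_{n,n}>0$; for $\gamma>0$ one has $A_0>0$, so $0<z_{1,n}<x_{1,n}$; and for $-n<\gamma<0$ one has $A_0<0$, so $0$ falls between $z_{1,n}$ and $z_{2,n}$, forcing $z_{1,n}<0<z_{2,n}$. Since these ranges exhaust the admissible $\gamma$ and give mutually exclusive conclusions, the equivalences (a) and (b) follow. I expect the main obstacle to be the sign bookkeeping, in particular fixing the sign of $\hat L_{n-1}$ at the extreme zero $x_{1,n}$ and of $\kappa_n$ relative to the $-\infty$ behaviour of $\hat L_n+a_n\hat L_{n-1}$, so that the unique escaping zero is unambiguously located on the correct side of $0$.
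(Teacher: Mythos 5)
Your proposal is correct and rests on exactly the same reduction as the paper: the identity \eqref{s2t1e2} with $\alpha_2=\gamma$, followed by \eqref{A}, yields the same two-term monic decomposition $\phi_n^{(1)}(z)=\kappa_n\bigl[\tilde L_n^{(\delta)}(zq;q)+a_n\tilde L_{n-1}^{(\delta)}(zq;q)\bigr]$ with the same coefficient $a_n=\frac{(1-q^{n+\delta})(1-q^n)}{q^{2n+\delta}(1-q^{n+\gamma})}$, and both arguments hinge on the sign of $a_n$ changing at $\gamma=-n$. Where you diverge is in how the conclusions are extracted: the paper treats the decomposition as input to Joulak's Theorems 4 and 5, verifying the hypotheses by computing $f_n(0)=\tilde L_n^{(\delta)}(0;q)/\tilde L_{n-1}^{(\delta)}(0;q)$ and comparing it with $-a_n$ by explicit inequality manipulation, whereas you re-derive the relevant special cases of those theorems from scratch by the classical sign-alternation count, and then locate the single ``escaping'' zero relative to $0$ via the normalization $\phi_n^{(1)}(0)=1$ together with $\operatorname{sign}\kappa_n=(-1)^n\operatorname{sign}A_0$. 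Your endpoint criterion is in fact equivalent to the paper's ($f_n(0)<-a_n$ is the same as $R_n(0)$ and $\tilde L_{n-1}^{(\delta)}(0;q)$ having opposite signs), and using $\phi_n^{(1)}(0)=1$ avoids the paper's algebra; the trade-off is that you must supply the interlacing proof yourself rather than citing it. One point to make explicit when writing this up: the sign of $R_n(0)$ alone does not show that $0$ lies to the left of all zeros --- you also need the fact, already established in your interlacing step, that $z_{2,n},\dots,z_{n,n}$ all lie in $(x_{1,n},\infty)\subset(0,\infty)$, so that at most the smallest zero can be non-positive and a single sign evaluation at $0$ decides between cases (a) and (b).
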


\begin{proof}
    Letting $r=2=s,\ \alpha_2=\gamma,\ \beta_1=q^{\delta+1},\ \beta_2=q^{\gamma}$ and replacing $z$ by $-q^{n+\delta+1}z$ in \eqref{s2t1e2}, we obtain
    \begin{equation}\label{s2t3e2}
    \phi_n^{(1)}(z)=\frac{(1-q^{n+\gamma})}{(1-q^{\gamma})} \frac{(q;q)_n}{(q^{\delta+1};q)_n}L_n^{(\delta)}(z;q)
    +\frac{q^{\gamma}(q^{n}-1)}{(1-q^{\gamma})}\ \frac{(q;q)_{n-1}}{(q^{\delta+1};q)_{n-1}} L_{n-1}^{(\delta)}(zq;q).
    \end{equation}
    Using \eqref{A} in \eqref{s2t3e2} yields
    \begin{equation*}
    {q^{n}(1-q^\gamma)\over (1-q^{n+\gamma})}{(q^{\delta+1};q)_n\over (q;q)_n}\phi_n^{(1)}(z)
    =L_{n}^{(\delta)}(zq;q)+{(q^{n+\delta}-1) \over (1-q^{n+\gamma})} L_{n-1}^{(\delta)}(zq;q),
    \end{equation*}
    which in terms of monic $q$-Laguerre polynomials $\displaystyle{\tilde{L}_n^{(\delta)}(zq;q)=\frac{(-1)^n (q;q)_n}{q^{n (\delta +n+1)}}L_n^{(\delta)}(zq;q)}$ can be written as
    \begin{equation*}
    {(-1)^n (q^{\delta+1};q)_n\over q^{n(n+\delta)}}{(1-q^\gamma)\over (1-q^{n+\gamma})}\phi_n^{(1)}(z)
    =\tilde{L}_{n}^{(\delta)}(zq;q)+a_n \tilde{L}_{n-1}^{(\delta)}(zq;q),
    \end{equation*}
    where $a_n=\displaystyle{\frac{(1-q^{n+\delta})(1-q^n)}{q^{2n+\delta}(1-q^{n+\gamma})}}.$ Now set $f_n(z)=\displaystyle{\tilde{L}_{n}^{(\delta)}(zq;q) \over \tilde{L}_{n-1}^{(\delta)}(zq;q)}.$ Then $\displaystyle{\lim_{b\to \infty}f_n(b)=\infty}$ and $f_n(0)=\displaystyle{\frac{(q^{n+\delta}-1)}{q^{2n+\delta}}}.$
    \begin{itemize}
        \item[(a)] Since $f_n(0)<-a_n$ if and only if $\gamma<-n$ or $\gamma>0,$ so by \cite[Theorem 4(iii)]{Joulak}, $z_{i,n}>0$ for $i\in\{1,\cdots,n\}.$
        \item[(b)] Since $-a_n<f_n(0)<0$ if and only if $-n<\gamma<0,$ so by \cite[Theorem 4(i)]{Joulak}, $z_{1,n}<0$ and $z_{i,n}>0;i\in\{2,\cdots,n\}.$
        \item[(c)] Since $a_n<0$ for $\gamma<-n$ and $a_n>0$ for $\gamma>-n,$ so by \cite[Theorem 5(i),(ii)]{Joulak}, we have
        \begin{itemize}
            \item[(i)] $x_{i,n}<z_{i,n}<x_{i,n-1}$ for $i\in\{1,\cdots,n-1\}$ and $x_{n,n}<z_{n,n},$
            \item[(ii)] $z_{1,n}<x_{1,n}$ and $x_{i-1,n-1}<z_{i,n}<x_{i,n}$ for $i\in\{2,\cdots,n\}.$
        \end{itemize}
    \end{itemize}
\end{proof}

\subsubsection{Order 2}
\noindent Substituting $k=2$ in Theorem \ref{s2t2}, we know that
$\phi_n^{(2)}(z)$ is quasi-orthogonal polynomial of order $2$ and has
at least $(n-2)$ real, distinct, positive zeros. In addition, we
can also prove the following:
\begin{theorem}
    Let $n\in\{2,3,\cdots\},\ \gamma, \delta\in\mathbb{R}$ with $\delta>-1$ and $\gamma\notin\{0,-1,\cdots,-n-2\}.$
    Denote the zeros of $\phi_n^{(2)}(z)$ by $y_{i,n},\ i\in\{1,2,\cdots,n\}$ and
    those of monic $q$-Laguerre polynomials $\tilde{L}_{n-1}^{(\delta)}(zq^2;q)$ by $x_{i,n-1},\ i\in\{1,2,\cdots,n-1\}.$
    Then for $-n-1<\gamma<-n,$ the zeros of ${\phi}_n^{(2)}(z)$ are all real and distinct and at most two of them are negative.
    Furthermore, the zeros of ${\phi}_n^{(2)}(z)$ interlace with the zeros of
    $\displaystyle{\tilde{L}_{n-1}^{(\delta)}(zq^2;q)}$
    as follows:
    \[y_{1,n}<x_{1,n-1}\ and\ x_{i-1,n-1}<y_{i,n}<x_{i,n-1}\]
    for $i\in\{2,\cdots,n-1\}$ and $x_{n-1,n-1}<y_{n,n}.$
\end{theorem}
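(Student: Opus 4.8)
The plan is to follow the pattern of the proof of Theorem~\ref{s2t3} (the order~$1$ case), but to carry the argument through to a three–term combination of orthogonal polynomials. First I would specialise the expansion \eqref{s2t2e1} to $k=2$, writing $\phi_n^{(2)}(z)$ as a linear combination of $L_n^{(\delta)}(z;q)$, $L_{n-1}^{(\delta)}(zq;q)$ and $L_{n-2}^{(\delta)}(zq^2;q)$. Applying the contiguous relation \eqref{A} twice to the first summand and once to the second brings every term to the common argument $zq^2$. Passing to monic polynomials and writing $G_n(z)$ for the resulting (monic-leading) right–hand side, I obtain, for a nonzero constant $C=C(n,\delta,\gamma,q)$,
\begin{equation*}
G_n(z):=C\,\phi_n^{(2)}(z)=\tilde{L}_n^{(\delta)}(zq^2;q)+b_n\,\tilde{L}_{n-1}^{(\delta)}(zq^2;q)+c_n\,\tilde{L}_{n-2}^{(\delta)}(zq^2;q),
\end{equation*}
with $b_n,c_n$ explicit in $n,\delta,\gamma,q$; the zeros $y_{i,n}$ of $\phi_n^{(2)}$ are exactly the zeros of $G_n$. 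This is the shape predicted by Shohat's characterisation of order-$2$ quasi-orthogonality recalled in Section~\ref{Section-1}, and it is the natural object because, after the change of variable $w=zq^2$, the family $\{\tilde{L}_m^{(\delta)}(zq^2;q)\}_{m\ge 0}$ is orthogonal with respect to the same weight $z^{\delta}/(-zq^2;q)_\infty$ appearing in Theorem~\ref{s2t2}, hence obeys a three-term recurrence $\tilde{L}_m^{(\delta)}(w;q)=(w-\beta_m)\tilde{L}_{m-1}^{(\delta)}(w;q)-\gamma_m\tilde{L}_{m-2}^{(\delta)}(w;q)$ with $\gamma_m>0$.

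Next I would write $x_{1,n-1}<\cdots<x_{n-1,n-1}$ for the positive zeros of $\tilde{L}_{n-1}^{(\delta)}(zq^2;q)$ and evaluate $G_n$ at these points. The middle term vanishes, and the recurrence gives $\tilde{L}_n^{(\delta)}=-\gamma_n\tilde{L}_{n-2}^{(\delta)}$ at each such point, so that
\begin{equation*}
G_n(x_{j,n-1})=(c_n-\gamma_n)\,\tilde{L}_{n-2}^{(\delta)}(x_{j,n-1}q^2;q),\qquad j=1,\dots,n-1.
\end{equation*}
Since the zeros of $\tilde{L}_{n-2}^{(\delta)}$ separate those of $\tilde{L}_{n-1}^{(\delta)}$, the factor $\tilde{L}_{n-2}^{(\delta)}(x_{j,n-1}q^2;q)$ alternates in sign with $j$; hence, provided $c_n\neq\gamma_n$, so does $G_n(x_{j,n-1})$. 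This forces a sign change of $G_n$ in each of the $n-2$ intervals $(x_{j,n-1},x_{j+1,n-1})$, giving $y_{i,n}\in(x_{i-1,n-1},x_{i,n-1})$ for $2\le i\le n-1$.

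The two remaining zeros, and the fact that all $n$ are real, come from a boundary analysis whose crux is to establish that, precisely in the range $-n-1<\gamma<-n$, one has $c_n-\gamma_n<0$. Granting this, the sign of $G_n(x_{n-1,n-1})$ is opposite to that of $\lim_{z\to+\infty}G_n(z)=+\infty$, forcing a zero in $(x_{n-1,n-1},\infty)$ and giving $x_{n-1,n-1}<y_{n,n}$; likewise the sign of $G_n(x_{1,n-1})$ is opposite to that of $\lim_{z\to-\infty}G_n(z)=(-1)^n\infty$, forcing a zero below $x_{1,n-1}$ and giving $y_{1,n}<x_{1,n-1}$. These $n$ sign changes lie in $n$ pairwise disjoint intervals, so a degree count ($\deg G_n=n$) shows each interval contains exactly one simple zero, proving that all zeros are real and distinct and that the stated interlacing holds. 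Since $x_{1,n-1}>0$ and every zero except possibly $y_{1,n}$ lies to its right, at most one—and in particular at most two—of the $y_{i,n}$ is negative, in agreement with the order-$2$ quasi-orthogonality bound of at least $n-2$ zeros in $(0,\infty)$.

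The main obstacle is the sign determination $c_n<\gamma_n$ in the indicated $\gamma$-window. This requires the explicit monic $q$-Laguerre recurrence coefficient $\gamma_n$ together with the closed forms of $b_n$ and $c_n$ produced by the double use of \eqref{A}, and a somewhat delicate manipulation of $q$-shifted factorials to locate where $c_n-\gamma_n$ changes sign. I expect the endpoints $\gamma=-n$ and $\gamma=-n-1$ to correspond exactly to the vanishing of factors of the form $(1-q^{n+\gamma})$ in $c_n-\gamma_n$, which is what singles out the interval $-n-1<\gamma<-n$.
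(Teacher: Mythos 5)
Your proposal is correct in substance and begins exactly as the paper does: both reduce $\phi_n^{(2)}(z)$, via the two-step contiguous relation (the paper's \eqref{2.16}, i.e.\ \eqref{s2t2e1} with $k=2$) followed by two applications of \eqref{A} to $L_n^{(\delta)}(z;q)$ and one to $L_{n-1}^{(\delta)}(zq;q)$, to a combination
$C\,\phi_n^{(2)}(z)=\tilde{L}_n^{(\delta)}(zq^2;q)+a_n\tilde{L}_{n-1}^{(\delta)}(zq^2;q)+b_n\tilde{L}_{n-2}^{(\delta)}(zq^2;q)$.
Where you diverge is in the endgame: the paper simply records the explicit coefficient
$b_n=\dfrac{q(1-q^{n+\delta})(1-q^{n+\delta-1})(1-q^n)(1-q^{n-1})}{q^{4n+2\delta}(1-q^{n+\gamma})(1-q^{n+\gamma+1})}$,
notes that $b_n<0$ if and only if $-n-1<\gamma<-n$, and cites \cite[Theorems 4 and 5]{BrezinskiDR}; you instead re-derive those cited theorems from scratch (evaluation at the zeros of $\tilde{L}_{n-1}^{(\delta)}(zq^2;q)$, the three-term recurrence giving $\tilde{L}_n=-\gamma_n\tilde{L}_{n-2}$ there, sign alternation, boundary signs at $\pm\infty$, and a degree count). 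Your argument is sound and is essentially the proof of the black box the paper invokes, so it buys self-containedness at the cost of length. Two remarks. First, you make the "crux" harder than it is: since $\gamma_n>0$ for any monic orthogonal family, the condition $c_n-\gamma_n<0$ that your sign argument needs is already implied by $c_n<0$, so no "delicate manipulation" comparing $c_n$ with $\gamma_n$ is required — only the sign of $c_n$ itself matters, and in the explicit formula above every factor is positive except the denominator factors $(1-q^{n+\gamma})(1-q^{n+\gamma+1})$, exactly one of which is negative precisely when $-n-1<\gamma<-n$ (these are sign changes of $c_n$ through poles, not zeros of $c_n-\gamma_n$ as you guessed). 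Second, the one genuine omission is that you defer this explicit computation of $c_n$ rather than carrying it out; it is routine, and the paper supplies it, so this is a computational rather than conceptual gap.
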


\begin{proof}
    Replacing $\alpha_2$ by $\alpha_2+1$ in \eqref{s2t1e4} and iterating, we obtain
    \begin{multline}
    \frac{q^{-2n}(1-q^{\alpha_2})(1-q^{\alpha_2+1})}{(q^{-n}-q^{\alpha_2})(q^{-n}-q^{\alpha_2+1})}\
    _r\phi_s\left(q^{-n},q^{\alpha_2+2};z\right)
    =\ _r\phi_s\left(q^{-n},q^{\alpha_2};z\right)
    +\frac{q^{\alpha_2}(1+q)(1-q^{-n})}{(q^{-n}-q^{\alpha_2+1})}\\
    \times\ _r\phi_s\left(q^{-n+1},q^{\alpha_2};z\right)
    +\frac{q^{2\alpha_2}(1-q^{-n})(1-q^{-n+1})}{(q^{-n}-q^{\alpha_2})(q^{-n}-q^{\alpha_2+1})}\
    _r\phi_s\left(q^{-n+2},q^{\alpha_2};z\right).\label{2.16}
    \end{multline}
    Substituting $r=2=s,\ \alpha_2=\gamma,\ \beta_1=q^{\delta+1},\ \beta_2=q^{\gamma}$ and replacing $z$ by $-q^{n+\delta+1}z$ in \eqref{2.16} , yields
    \begin{multline*}\nonumber
    {(1-q^\gamma)(1-q^{\gamma+1}) \over {(1-q^{n+\gamma})(1-q^{n+\gamma+1})}} \phi_n^{(2)}(z)
    ={(q;q)_{n}\over(q^{\delta+1};q)_{n}} L_n^{(\delta)}(z;q)
    +{q^{\gamma}(1+q)(q^{n}-1)\over(1-q^{n+\gamma+1})} {(q;q)_{n-1}\over(q^{\delta+1};q)_{n-1}}L_{n-1}^{(\delta)}(zq;q)\\
    +{q^{2\gamma+1}(1-q^{n})(1-q^{n-1})\over (1-q^{n+\gamma})(1-q^{n+\gamma+1})} {(q;q)_{n-2}\over(q^{\delta+1};q)_{n-2}}L_{n-2}^{(\delta)}(zq^2;q).
    \end{multline*}
    Using \eqref{A} to replace $L_{n}^{(\delta)}(z;q)$ and $L_{n-1}^{(\delta)}(zq;q)$ and writing the result in terms of monic $q$-Laguerre polynomials  $\displaystyle{\tilde{L}_{n}^{(\delta)}(zq^2;q)}=\frac{(-1)^{n} (q;q)_{n}}{q^{n(n+\delta+2)}}L_{n}^{(\delta)}(zq^2;q),$ we obtain
    \begin{multline*}
    {(-1)^n (q^{\delta+1};q)_n \over q^{n(n+\delta)}} {(1-q^\gamma)(1-q^{\gamma+1}) \over {(1-q^{n+\gamma})(1-q^{n+\gamma+1})}} \phi_n^{(2)}(z)
    =\tilde{L}_n^{(\delta)}(zq^2;q)\\
    +{(1-q^{n+\delta})(1+q)(1-q^{n})\over q^{2n+\delta+1}(1-q^{n+\gamma+1})} \tilde{L}_{n-1}^{(\delta)}(zq^2;q)+b_n \tilde{L}_{n-2}^{(\delta)}(zq^2;q),
    \end{multline*}
    where $$b_n={q(1-q^{n+\delta})(1-q^{n+\delta-1})(1-q^n)(1-q^{n-1})\over q^{4n+2\delta}(1-q^{n+\gamma})(1-q^{n+\gamma+1})}.$$
    Since
    $b_n<0$ if and only if $-n-1<\gamma<-n,$ by \cite[Theorem 4 and 5]{BrezinskiDR}, the results hold.
\end{proof}

\section{Quasi-orthogonality of $\boldsymbol{_3\phi_2}$ polynomials}\label{Section-3}

\subsection{Quasi-orthogonality using little $\boldsymbol{q}$-Jacobi polynomials}
We begin by considering the quasi-orthogonality of a class of $_3\phi_2$ polynomials.

\begin{theorem}
    Let $n\in\mathbb{N},\ k=1,\cdots,n-1,\ 0<q<1,\ 0<aq<1,\ bq<1,$ $\gamma\in\mathbb{R}$ and $\gamma,\ \gamma+k\notin\{0,-1,-2,\cdots,-n\}.$ Then the polynomial $\Phi_n^{(k)}(z)$
    is quasi-orthogonal of order $k$ with respect to the weight function $\displaystyle{(bq;q)_z (aq)^z \over (q;q)_z}(zbq;q)_k$ on the interval of orthogonality $(0,1)$ and has at least $(n-k)$ distinct real positive zeros.
\end{theorem}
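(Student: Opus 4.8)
The plan is to transcribe the proof of Theorem~\ref{s2t2} into the little $q$-Jacobi setting. First I would apply Theorem~\ref{s2t1} with $r=3$, $s=2$, $\alpha_2=\gamma+k-1$, $\alpha_3=abq^{n+1}$, $\beta_1=aq$, $\beta_2=q^{\gamma}$ and argument $qz$. Because the reduced numerator parameter is $q^{\alpha_2-k+1}=q^{\gamma}$, it coincides with the denominator parameter $q^{\gamma}$ and cancels, so every term on the right-hand side collapses from a $_3\phi_2$ to a $_2\phi_1$. The key observation is that the surviving ${}_2\phi_1\left(\begin{smallmatrix}q^{-n+j},\,abq^{n+1}\\ aq\end{smallmatrix};q,qz\right)$ is exactly $p_{n-j}(z;a,bq^{j}|q)$, since $abq^{n+1}=a(bq^{j})q^{(n-j)+1}$; that is, the ``frozen'' numerator parameter $abq^{n+1}$ re-expresses itself as a shift $b\mapsto bq^{j}$ of the second little $q$-Jacobi parameter. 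This yields the clean expansion $\Phi_n^{(k)}(z)=\sum_{j=0}^{k}A_j\,p_{n-j}(z;a,bq^{j}|q)$ with all $A_j\neq0$, and no normalising prefactor is needed here, in contrast to \eqref{s2t2e1}.

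Next I would iterate the contiguous relation \eqref{s2L1e2}, which plays the role \eqref{s2L1e1} played for $q$-Laguerre: one application trades the shift $bq\mapsto b$ for the linear factor $(1-zqb)$ while raising the degree by at most one. Iterating it $j$ times and using $\prod_{l=1}^{j}(1-zbq^{l})=(zbq;q)_j$ gives $(zbq;q)_j\,p_{n-j}(z;a,bq^{j}|q)=\sum_{i=0}^{j}\tilde a_{j,n-j+i}\,p_{n-j+i}(z;a,b|q)$, the analogue of \eqref{s2t2e2}. I would then multiply $\Phi_n^{(k)}$ by $(zbq;q)_k$, split it via $(zbq;q)_k=(zbq;q)_j\,(zbq^{j+1};q)_{k-j}$, and substitute the iterated relation to obtain $(zbq;q)_k\,\Phi_n^{(k)}(z)=\sum_{i=0}^{k}h_{k-i}(z)\,p_{n-i}(z;a,b|q)$, where $h_{k-i}(z)=\sum_{j=i}^{k}A_j\,\tilde a_{j,n-i}\,(zbq^{j+1};q)_{k-j}$ has exact degree $k-i$. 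This is the exact counterpart of \eqref{s2t2e3}.

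Finally, I would apply the (discrete) little $q$-Jacobi orthogonality functional $\mathcal L$. Multiplying the last identity by $z^{t}$ against the weight $\frac{(bq;q)_z(aq)^z}{(q;q)_z}$ and summing over the support, each summand $z^{t}h_{k-i}(z)p_{n-i}(z;a,b|q)$ has $z^{t}h_{k-i}$ of degree $t+k-i$, which is $<n-i=\deg p_{n-i}$ precisely when $t\le n-k-1$; hence every term vanishes for $t=0,\dots,n-k-1$. Since inserting the factor $(zbq;q)_k$ is the same as pairing $\Phi_n^{(k)}$ with the advertised weight $\frac{(bq;q)_z(aq)^z}{(q;q)_z}(zbq;q)_k$, this establishes the orthogonality half of quasi-orthogonality of order $k$ on $(0,1)$. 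For $t=n-k$ the term with index $i$ collapses to $\lambda_i\,\mathcal L[z^{n-i}p_{n-i}]$, where $\lambda_i$ is the leading coefficient of $h_{k-i}$, and summing these nonzero norms pins the order down to exactly $k$. The factor $(zbq;q)_k$ is positive on the support (e.g. $(bq^{m+1};q)_k>0$ once $bq<1$), so the modified weight stays positive and the zero-separation result quoted in Section~\ref{Section-1} then yields at least $n-k$ distinct zeros of $\Phi_n^{(k)}$ in $(0,1)$, all positive.

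I expect the main obstacle to be the non-vanishing of the $t=n-k$ moment: unlike the pure Shohat situation, the coefficients $h_{k-i}(z)$ are polynomials rather than constants, so the surviving sum $\sum_{i=0}^{k}\lambda_i\,\mathcal L[z^{n-i}p_{n-i}]$ is a genuine combination of $k+1$ individually nonzero terms and must be shown not to cancel; as in Theorem~\ref{s2t2} this is settled by a direct leading-coefficient comparison, ultimately controlled by $A_k\neq0$ governing the lowest component $p_{n-k}$. A secondary point requiring care is the discrete nature of the little $q$-Jacobi orthogonality and checking positivity of the modified weight on the lattice, so that the quoted zero-counting theorem genuinely applies.
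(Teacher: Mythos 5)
Your proposal is correct and takes essentially the same route as the paper: you recover the expansion $\Phi_n^{(k)}(z)=\sum_{j=0}^k A_j\,p_{n-j}(z;a,bq^j|q)$ of \eqref{a1}, clear the parameter shift by iterating \eqref{s2L1e2} at the cost of the factor $(zbq;q)_k$, and then evaluate moments against the little $q$-Jacobi weight exactly as in \eqref{a3}. The one step you flag as delicate --- non-cancellation of the $t=n-k$ moment $\sum_{i}\lambda_i\,\mathcal L[z^{n-i}p_{n-i}]$ --- is simply asserted in the paper as well, so your reconstruction is faithful, and if anything slightly more careful, on that point.
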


\begin{proof} When $k=1$, letting $r=3,\ s=2,\ \alpha_2=\gamma,\ \alpha_3=abq^{n+1},\ \beta_1=aq,\ \beta_2=q^\gamma$ and replacing $z$ by $qz$ in \eqref{s2t1e2}, we have
    \begin{equation}\Phi_n^{(1)}(z)=\frac{q^n(q^{-n}-q^\gamma)}{(1-q^\gamma)}p_n(z;a,b|q)
    +\frac{q^{n+\gamma}(1-q^{-n})}{(1-q^\gamma)}p_{n-1}(z;a,bq|q),\label{s3t1e1}
    \end{equation}
    and substituting \eqref{s2L1e2} into \eqref{s3t1e1}, yields
    \begin{equation}
    \Phi_n^{(1)}(z)=q_1(z)p_n(z;a,b|q)
    +q_0(z)p_{n-1}(z;a,b|q),\label{s3t1e2}
    \end{equation}
    where \begin{align*}q_1(z)&=\frac{q^{n+\gamma}}{(1-q^\gamma)    (1-zqb)}\left[(q^{-n-\gamma}-1)(1-zqb)+\frac{b(q^n-1)(aq^n-1)}{(abq^{2n}-1)}\right]\\ \text{and}\ q_0(z)&=\frac{q^{\gamma}(q^{n}-1)(bq^n-1)}{   (1-zqb)(1-q^\gamma)(abq^{2n}-1)}.
    \end{align*}
   For $k=2$, substituting $r=3,\ s=2,\ \alpha_2=\gamma,\ \alpha_3=abq^{n+1},\ \beta_1=aq,\ \beta_2=q^\gamma$ and replacing $z$ by $qz$ in \eqref{2.16}, we obtain
    \begin{multline}
    {q^{-2n}(1-q^\gamma)(1-q^{\gamma+1})\over(q^{-n}-q^\gamma)(q^{-n}-q^{\gamma+1})}\ \Phi_n^{(2)}(z)
    =p_n(z;a,b|q)+{q^\gamma(1+q)(1-q^{-n})\over(q^{-n}-q^{\gamma+1})}\\
    \times\ p_{n-1}(z;a,bq|q) +{q^{2\gamma}(1-q^{-n})(1-q^{-n+1})\over(q^{-n}-q^\gamma)(q^{-n}-q^{\gamma+1})}p_{n-2}(z;a,bq^2|q).\label{s3t2e1}
    \end{multline}
    Substituting the values of $p_{n-1}(z;a,bq|q)$ and $p_{n-2}(z;a,bq^2|q)$ from \eqref{s2L1e2} into \eqref{s3t2e1}, yields
    \begin{multline}
    {(1-q^\gamma)(1-q^{\gamma+1})\over(1-q^{n+\gamma})(1-q^{n+\gamma+1})}\ \Phi_n^{(2)}(z)
    ={[A_1(z)+B_1(z)+C_1(z)]\over A_1(z)}p_n(z;a,b|q)\\
    +{q^\gamma(1-q^{n})(1-bq^n)\over(1-zbq)(1-q^{n+\gamma+1})}\bigg[{(1+q)\over(abq^{2n}-1)}
    +{bq^{n+\gamma+1}\over (1-zbq^2)(1-q^{n+\gamma})}\\
    \times\ {(1-q^{n-1})(1-aq^{n-1})\over(1-abq^{2n})(1-abq^{2n-1})}
    +{bq^{n+\gamma}(1-q^{n-1})(1-aq^{n-1})\over(1-zbq^2)(1-q^{n+\gamma})(1-abq^{2n-1})(1-abq^{2n-2})}\bigg]\\
    \times\ p_{n-1}(z;a,b|q)
    +{q^{2\gamma+1}(1-q^{n})(1-q^{n-1})\over(1-zbq)(1-zbq^2)(1-q^{n+\gamma})(1-q^{n+\gamma+1})}\\
    \times\ {(1-bq^n)(1-bq^{n-1})\over(1-abq^{2n-1})(1-abq^{2n-2})} p_{n-2}(z;a,b|q),\label{s3t2e2}
    \end{multline}
    where
    \begin{align*}
        &A_1(z)=(1-zbq)(1-zbq^2)(q^{-n}-q^\gamma)(q^{-n}-q^{\gamma+1})(abq^{2n}-1)(abq^{2n-1}-1),\\
        &B_1(z)=bq^{n+\gamma}(1-zbq^2)(1+q)(aq^n-1)(1-q^{-n})(q^{-n}-q^\gamma)(abq^{2n-1}-1),\\
        \text{and}&\\
        &C_1(z)=b^2q^{2n+2\gamma}(aq^n-1)(aq^{n-1}-1)(1-q^{-n})(1-q^{-n+1}).
        \end{align*}
For general $k=1,\dots,n-1$, we let $r=3,\ s=2$,
$\alpha_2=\gamma+k-1,\ \alpha_3=abq^{n+1},\ \beta_1=aq,\
\beta_2=q^\gamma$ and replace $z$ by $qz$ in \eqref{s2t1e1} to
obtain
\begin{multline}
\Phi_n^{(k)}(z)=A_0\ _2\phi_1\left(\begin{matrix}q^{-n},abq^{n+1}\\aq\end{matrix};q,qz\right)
+A_1\ _2\phi_1\left(\begin{matrix}q^{-n+1},abq^{n+1}\\aq\end{matrix};q,qz\right)+\cdots\\
+A_k\ _2\phi_1\left(\begin{matrix}q^{-n+k},abq^{n+1}\\aq\end{matrix};q,qz\right)
=\sum_{j=0}^k A_j\ p_{n-j}(z;a,bq^j|q),\label{a1}
\end{multline}
where $A_i,i=0,1,2,\cdots,k$ are non-zero constants depending on
$n, \gamma$ and $q$. Using \eqref{s2L1e2} to substitute the values
of $p_{n-j}(z;a,bq^j|q)$ iteratively into \eqref{a1} for each
$j=\{0,1,\cdots,k\}$, we obtain
    \begin{equation}\label{a2}
    \Phi_n^{(k)}(z)=\sum_{i=0}^{k}q_{n-i}(z)p_{n-i}(z;a,b|q),
    \end{equation}
    where each coefficient $q_{n-i}(z)$ in \eqref{a2} has a factor of the form $(zbq;q)_k$ in the denominator. Hence, multiplying by $(zbq;q)_k$ on both sides of \eqref{a2}, we see that
    \begin{equation}\label{a3}
    (zbq;q)_k\Phi_n^{(k)}(z)=\sum_{i=0}^{k}r_{n-i}(z)p_{n-i}(z;a,b|q),
    \end{equation}
    where the coefficient $r_{n-k}(z)$ is constant. Multiplying $w(z)z^t$ where $t\in\{0,1,\cdots,n-k\}$ and $w(z)=\displaystyle{(bq;q)_z (aq)^z \over (q;q)_z}$ is the little $q$-Jacobi weight, and taking summation on both sides of \eqref{a3}, we get
    \begin{eqnarray*}
        \sum_{z=0}^{\infty}z^t w(z) (zbq;q)_k\Phi_n^{(k)}(z) &=& \sum_{z=0}^{\infty} \sum_{i=0}^{k}z^t w(z) r_{n-i}(z)p_{n-i}(z;a,b|q)\\
        && \begin{cases}=0;\quad t=0,1,\cdots, n-k-1,\\ \neq 0 ;\quad t=n-k.\end{cases}
    \end{eqnarray*}
    This proves that $\Phi_n^{(k)}(z)$ is quasi-orthogonal of order $k$ and therefore it has $(n-k)$ real positive zeros in $(0,1)$ with respect to the weight function $\displaystyle{(bq;q)_z (aq)^z \over (q;q)_z}(zbq;q)_k.$
\end{proof}

\subsubsection{Order 1}

\begin{theorem}
    Let $n\in\mathbb{N},~0 < q < 1,~0 < aq <1,~bq <1$ and $\gamma\in\mathbb{R}$ with $\gamma\notin\{0,-1,\cdots,-n-1\}.$ Denote the zeros of
    $\Phi_n^{(1)}(z)$ by $Z_{i,n},\ i\in\{1,2,\cdots,n\}$ and those of the monic little $q$-Jacobi polynomial
    $\displaystyle{\tilde{p}_n(z;a,bq|q)}$
 by $X_{i,n},\ i\in\{1,2,\cdots,n\}.$
Then
    \begin{itemize}
        \item[(a)] $Z_{1,n}<0$ and $Z_{i,n}\in(0,1)$ for $i\in\{2,\cdots,n\}$ if and only if $-n<\gamma<0$.
        \item[(b)] $Z_{i,n}\in(0,1)$ for $i\in\{1,\cdots,n\}$ when $0<\gamma<n-1$ or $\gamma>n-1$ and $b<0$.
        \item[(c)] The zeros of $\Phi_n^{(1)}(z)$ interlace with those of $\tilde{p}_{n}(z;a,b q|q)$ and $\tilde{p}_{n-1}(z;a,b q|q)$ as follows:
        \begin{itemize}
            \item[(i)] $Z_{1,n}<X_{1,n}$ and $X_{i-1,n-1}<Z_{i,n}<X_{i,n}$ for $i\in\{2,\cdots,n\}$ when $-n<\gamma<n-1$ or $\gamma>n-1$ and $b<0$; \item[(ii)] $X_{i,n}<Z_{i,n}<X_{i,n-1}$ for $i\in\{1,\cdots,n-1\}$ and $X_{n,n}<Z_{n,n}$
            when $\gamma<-n$ or $\displaystyle{\gamma>n-1+\frac{\log abq^2}{\log q}}$ and $b>0$.
        \end{itemize}
    \end{itemize}
\end{theorem}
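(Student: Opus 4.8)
\noindent The plan is to mirror the proof of Theorem~\ref{s2t3}: reduce $\Phi_n^{(1)}(z)$ to a two-term combination of consecutive \emph{monic} little $q$-Jacobi polynomials from the shifted family $\tilde p_m(z;a,bq|q)$, and then read off the location and interlacing of its zeros from the general theory of quasi-orthogonal polynomials of order one. Starting from \eqref{s3t1e1}, which already writes $\Phi_n^{(1)}(z)$ in terms of $p_n(z;a,b|q)$ and $p_{n-1}(z;a,bq|q)$, I would use the contiguous relation \eqref{s2L1e2a} to re-express $p_n(z;a,b|q)$ through $p_n(z;a,bq|q)$ and $p_{n-1}(z;a,bq|q)$. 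Collecting terms gives $\Phi_n^{(1)}(z)=C_1\,p_n(z;a,bq|q)+C_0\,p_{n-1}(z;a,bq|q)$ with $C_0+C_1=1$ (a useful check, since all three polynomials take the value $1$ at $z=0$). Writing $p_m(z;a,bq|q)=\lambda_m\tilde p_m(z;a,bq|q)$ and dividing through by the leading coefficient $C_1\lambda_n$ of $\Phi_n^{(1)}$ puts the identity in the standard form
\[
(\text{const})\,\Phi_n^{(1)}(z)=\tilde p_n(z;a,bq|q)+A_n\,\tilde p_{n-1}(z;a,bq|q),\qquad A_n=\frac{C_0\,\lambda_{n-1}}{C_1\,\lambda_n}.
\]

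\noindent The heart of the argument is then a sign analysis. A short computation yields $C_0/C_1=\dfrac{(1-q^n)(abq^{n+1}-q^\gamma)}{(1-q^{n+\gamma})(1-abq^{n+1})}$, and since $0<aq<1$ and $bq<1$ force $abq^{n+1}<1$ (indeed $abq^{n+1}\le abq^2<1$ when $b>0$ and $abq^{n+1}<0$ when $b<0$) while $\operatorname{sign}(\lambda_{n-1}/\lambda_n)=-1$, the sign of $A_n$ is controlled entirely by the factor $abq^{n+1}-q^\gamma$ against $1-q^{n+\gamma}$. The numerator $abq^{n+1}-q^\gamma$ vanishes precisely at $\gamma=n-1+\frac{\log abq^2}{\log q}$, which exceeds $n-1$ when $b>0$ (since then $abq^2<1$) and is never attained when $b<0$ (where $abq^{n+1}<0<q^\gamma$). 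This is the source of the two thresholds in the statement: $\gamma>n-1+\frac{\log abq^2}{\log q}$ is the exact condition for $A_n<0$ in part~(c)(ii), whereas $\gamma<n-1$ is the clean sufficient condition used in part~(c)(i) (and in (b)) to guarantee $A_n>0$.

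\noindent To locate the extra zero $Z_{1,n}$ relative to the endpoint $0$, I would evaluate $f_n(z)=\tilde p_n(z;a,bq|q)/\tilde p_{n-1}(z;a,bq|q)$ at $z=0$; from $p_m(0;a,bq|q)=1$ this is simply $f_n(0)=\lambda_{n-1}/\lambda_n<0$. Using $C_0+C_1=1$ one finds the convenient identity $A_n+f_n(0)=f_n(0)/C_1$, so the crossing $Z_{1,n}=0$ is governed solely by $\operatorname{sign}(C_1)=\operatorname{sign}\!\big(\tfrac{1-q^{n+\gamma}}{1-q^\gamma}\big)$, which changes at $\gamma=-n$ and $\gamma=0$ and yields exactly the dichotomy $Z_{1,n}<0\iff -n<\gamma<0$ of part~(a). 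Feeding $\operatorname{sign}(A_n)$ and the comparison of $f_n(0)$ with $-A_n$ into the location and interlacing results already invoked in Theorem~\ref{s2t3} (cf.\ \cite{Joulak,BrezinskiDR}) then delivers (a), (b) and (c); note that when $A_n>0$ the interlacing pattern $X_{i-1,n-1}<Z_{i,n}<X_{i,n}$ automatically confines $Z_{2,n},\dots,Z_{n,n}$ to $(0,1)$, so no separate analysis at the right endpoint is needed.

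\noindent The main obstacle is the sign bookkeeping across the two regimes $b>0$ and $b<0$, in which the factor $abq^{n+1}-q^\gamma$ behaves qualitatively differently; one must keep track of which interlacing pattern is in force (the extra zero escaping to the left when $A_n>0$, to the right when $A_n<0$) and reconcile the conservative bound $\gamma<n-1$ with the exact sign change at $n-1+\frac{\log abq^2}{\log q}$. Once the identities $C_0+C_1=1$ and $A_n+f_n(0)=f_n(0)/C_1$ are in hand, however, the parameter thresholds in (a)--(c) fall out directly.
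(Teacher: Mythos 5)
Your proposal is correct and follows essentially the same route as the paper: the same reduction of $\Phi_n^{(1)}(z)$ via \eqref{s3t1e1} and \eqref{s2L1e2a} to the form $\tilde p_n(z;a,bq|q)+A_n\tilde p_{n-1}(z;a,bq|q)$ with the same $A_n$ as in \eqref{An}, followed by the same appeal to Joulak's Theorems 4 and 5. Your normalization $C_0+C_1=1$ and the identity $A_n+f_n(0)=f_n(0)/C_1$ are a pleasant streamlining of the paper's direct comparison of $-A_n$ with $F_n(0)$ and $F_n(1)$, but the decomposition, the sign analysis of $1-abq^{n+1-\gamma}$ against $1-q^{n+\gamma}$, and the conclusions are the same.
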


\begin{proof}
Using \eqref{s2L1e2a}, \eqref{s3t1e1} can be written in terms of monic little $q$-Jacobi polynomials \newline $\displaystyle{\tilde{p}_n(z;a,bq|q)
= \frac{(-1)^n q^{n(n-1)\over 2}(aq;q)_n}{ (abq^{n+2};q)_n}p_n(z;a,b q|q)}$ as
    \begin{equation*}
    (-1)^n q^{n^2-n\over 2} {(aq;q)_n \over (abq^{n+2};q)_n} {(1-q^\gamma)(1-abq^{2n+1})\over(1-q^{n+\gamma})(1-abq^{n+1})} \Phi_n^{(1)}(z)= \tilde{p}_{n}(z;a,bq|q)
    + A_n \tilde{p}_{n-1}(z;a,bq|q),
    \end{equation*}
    where \begin{equation} A_n={q^{n+\gamma-1}(1-q^n)(1-aq^n)(1-abq^{n-\gamma+1}) \over (1-q^{n+\gamma})(1-abq^{2n})(1-abq^{2n+1})}.\label{An}\end{equation} Now set $ \displaystyle{F_n(z)={\tilde{p}_{n}(z;a,bq|q) \over \tilde{p}_{n-1}(z;a,bq|q)}},$ so that \begin{equation}\label{fn0}F_n(0)=-{q^{n-1} (1-aq^n)(1-abq^{n+1}) \over (1-abq^{2n})(1-abq^{2n+1})}\end{equation} and \begin{equation}
    \label{fn1}
    F_n(1)={aq^{2n-1} (1-bq^{n+1})(1-abq^{n+1}) \over (1-abq^{2n})(1-abq^{2n+1})}.\end{equation} It follows immediately from the assumptions $n\in\mathbb{N},~0 < q < 1,~0 < aq <1,~bq <1$ that $F_n(0)<0$ and $F_n(1)>0$.
    \begin{itemize}
        \item[(a)]  Using \eqref{An} and \eqref{fn0}, we see that $-A_n<F_n(0)$ if and only if \begin{align*}
        \frac{q^{\gamma}(1-q^n)(1-abq^{n-\gamma+1})}{(1-q^{n+\gamma})}>(1-abq^{n+1}).
        \end{align*} Therefore, $-A_n<F_n(0)$ if and only if $\gamma>-n$ and \begin{equation}(1-q^n)(q^{\gamma}-abq^{n+1})>(1-abq^{n+1})(1-q^{n+\gamma})\label{e2}.\end{equation} Since \eqref{e2} is equivalent to $$q^{\gamma}(1-abq^{2n+1})>(1-abq^{2n+1}),$$ it follows that $-A_n<F_n(0)$ if and only if $-n<\gamma<0$. The result now follows from \cite[Theorem 4(i)]{Joulak}.
        \item[(b)] It is easy to see that $F_n(0)<-A_n$ if and only if $\gamma<-n$ or $\gamma>0$.
        To establish the relationship between $A_n$ and $F_n(1)$, consider
        \begin{equation}\frac{A_n}{F_n(1)}=\frac{\left(1-q^n\right) \left(1-a q^n\right) \left(1-a b q^{n-\gamma +1}\right)}{a q^{n-\gamma } \left(1-b q^{n+1}\right) \left(1-q^{\gamma +n}\right) \left(1-a b q^{n+1}\right)}\end{equation} where  $(1-q^{\gamma+n})>0$ if and only if $\gamma>-n$ while $(1-abq^{n-\gamma+1})>0$ when $-n<\gamma<n-1$  or $\gamma>n-1$ and $b<0$. Since there is no solution for $(1-q^{\gamma+n})<0$ and $(1-abq^{n-\gamma+1})<0$ we have $\displaystyle{\frac{-A_n}{F_n(1)}<0<1}$ when $-n<\gamma<n-1$ or $\gamma>n-1$ and $b<0$ and therefore $F_n(0)<-A_n<F_n(1)$ when $0<\gamma<n-1$ or $\gamma>n-1$ and $b<0$. The result follows from \cite[Theorem 4(iii)]{Joulak}.
        \item[(c)] Since $A_n>0$ if and only if $\displaystyle{\frac{1-abq^{n-\gamma+1}}{1-q^{n+\gamma}}>0}$, it follows, as above, that $A_n>0$ when $-n<\gamma<n-1$ or $\gamma>n-1$ and $b<0$. $A_n<0$ when  $\gamma<-n$ or $\displaystyle{\gamma>n-1+\frac{\log abq^2}{\log q}}$ and $b>0$.
The result follows from \cite[Theorem 5(i),(ii)]{Joulak}.
\end{itemize}
\end{proof}

\subsubsection{Order 2}

\begin{theorem}
    Let $n\in\{2,3,\cdots\},\ 0<aq<1,\ bq<1$ and $-n\neq\gamma\in\mathbb{R}$ with $\gamma\notin\{0,-1,\cdots,-n-2\}.$
Denote the zeros of $\Phi_n^{(2)}(z)$ by $Y_{i,n},\ i\in\{1,2,\cdots,n\}$ and
those of monic little $q$-Jacobi polynomials $\tilde{p}_{n-1}(z;a,bq^2|q)$ by $X_{i,n-1},\ i\in\{1,2,\cdots,n-1\}.$
Then for $-n-1<\gamma<-n,$ the zeros of ${\Phi}_n^{(2)}(z)$ are real and distinct and at most two of them are negative.
Furthermore, the zeros of ${\Phi}_n^{(2)}(z)$ interlace with the zeros of
    $\displaystyle{\tilde{p}_{n-1}(z;a,bq^2|q)}$
as follows:
    \[Y_{1,n}<X_{1,n-1},\ X_{i-1,n-1}<Y_{i,n}<X_{i,n-1}\ for\ i\in\{2,\cdots,n-1\}\ and\ X_{n-1,n-1}<Y_{n,n}.\]
\end{theorem}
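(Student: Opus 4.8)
The plan is to mirror the preceding Order~2 argument for $q$-Laguerre polynomials, with the little $q$-Jacobi parameter-raising relation \eqref{s2L1e2a} playing the role that \eqref{A} played there. The natural starting point is the three-term expansion \eqref{s3t2e1}, which already writes $\Phi_n^{(2)}(z)$ as a combination of $p_n(z;a,b|q)$, $p_{n-1}(z;a,bq|q)$ and $p_{n-2}(z;a,bq^2|q)$; note that the three summands carry the shifted parameters $b$, $bq$ and $bq^2$. Since the interlacing partner named in the statement is $\tilde{p}_{n-1}(z;a,bq^2|q)$, the first move is to bring all three terms to the common parameter value $bq^2$.

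First I would apply \eqref{s2L1e2a} twice to $p_n(z;a,b|q)$ and once to $p_{n-1}(z;a,bq|q)$, after which every polynomial on the right-hand side of \eqref{s3t2e1} is one of $p_n(z;a,bq^2|q)$, $p_{n-1}(z;a,bq^2|q)$ or $p_{n-2}(z;a,bq^2|q)$. Collecting like terms and passing to the monic polynomials $\tilde{p}_j(z;a,bq^2|q)$, normalised as in the Order~1 proof, I expect to reach an identity of the form
\begin{equation*}
C_n\,\Phi_n^{(2)}(z)=\tilde{p}_n(z;a,bq^2|q)+c_n\,\tilde{p}_{n-1}(z;a,bq^2|q)+d_n\,\tilde{p}_{n-2}(z;a,bq^2|q),
\end{equation*}
with a nonzero constant $C_n$ and coefficients $c_n,d_n$ depending only on $n,\gamma,a,b$ and $q$; since $q^{-n}-q^{\gamma}=q^{-n}(1-q^{n+\gamma})$, the factor $(1-q^{n+\gamma})(1-q^{n+\gamma+1})$ inherited from the prefactor of \eqref{s3t2e1} should survive in the denominators of $c_n$ and $d_n$.

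The decisive step, exactly as for $b_n$ in the $q$-Laguerre case, is to read off the sign of the lowest coefficient $d_n$. I expect $d_n$ to reduce to a product of linear factors whose only $\gamma$-dependence sits in $(1-q^{n+\gamma})(1-q^{n+\gamma+1})$, so that $d_n<0$ precisely when $-n-1<\gamma<-n$. With this sign established, \cite[Theorems 4 and 5]{BrezinskiDR} apply verbatim to the displayed representation and deliver all three conclusions at once: the $n$ zeros $Y_{i,n}$ are real and distinct, at most two of them are negative, and they interlace with the $(n-1)$ zeros $X_{i,n-1}$ of $\tilde{p}_{n-1}(z;a,bq^2|q)$ in the stated pattern.

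The main obstacle I anticipate is purely computational. After three applications of \eqref{s2L1e2a} and the passage to monic form, $d_n$ (and to a lesser extent $c_n$) is a sum of several rational expressions in $q^n$, $q^{\gamma}$, $a$ and $b$, and collapsing it into a single product in which $(1-q^{n+\gamma})$ and $(1-q^{n+\gamma+1})$ are the only sign-changing factors will require patient cancellation. Some care is needed because only $bq<1$ is assumed, so $b$ may be negative; however, the hypotheses $0<aq<1$ and $bq<1$ force $abq^m<1$ and $bq^m<1$ for every $m\ge 2$, so I expect all the residual $a,b$-dependent factors of $d_n$ to be strictly positive and no case distinction beyond the $\gamma$-condition to be needed.
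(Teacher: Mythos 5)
Your proposal matches the paper's proof essentially verbatim: the paper likewise substitutes \eqref{s2L1e2a} into \eqref{s3t2e1} to rewrite $\Phi_n^{(2)}(z)$ as a combination of $\tilde{p}_{n}(z;a,bq^2|q)$, $\tilde{p}_{n-1}(z;a,bq^2|q)$ and $\tilde{p}_{n-2}(z;a,bq^2|q)$, and then concludes from the sign of the $\tilde{p}_{n-2}$-coefficient via \cite[Theorems 4 and 5]{BrezinskiDR}. The only small inaccuracy in your forecast is that the $\gamma$-dependence of $d_n$ is not confined to $(1-q^{n+\gamma})(1-q^{n+\gamma+1})$ --- the numerator also carries $(1-abq^{n-\gamma})(1-abq^{n+1-\gamma})$ and a power $q^{2n+2\gamma-2}$ --- but these extra factors are positive for $-n-1<\gamma<-n$ under the stated hypotheses, so your sign analysis goes through.
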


\begin{proof}
Substituting the expressions for $p_{n}(z;a,b|q)$ and $p_{n-1}(z;a,bq|q)$ from \eqref{s2L1e2a} into \eqref{s3t2e1} and
   writing it in terms of monic little $q$-Jacobi polynomials \newline  $\displaystyle{\tilde{p}_{n}(z;a,bq^2|q)= \frac{(-1)^{n} q^{n(n-1)\over 2}(aq;q)_{n}}{(abq^{n+3};q)_{n}} p_{n}(z;a,bq^2|q)},$ we obtain
    \begin{multline*}
    (-1)^n q^{n^2-n\over 2} {(aq;q)_n \over (abq^{n+3};q)_n} {(1-abq^{2n+1})(1-abq^{2n+2})\over (1-abq^{n+1})(1-abq^{n+2})}
    {(1-q^\gamma)(1-q^{\gamma+1}) \over (1-q^{n+\gamma})(1-q^{n+\gamma+1})} \Phi_n^{(2)}(z)=\tilde{p}_{n}(z;a,bq^2|q)\\
    -\bigg[{abq^{2n+1}(1-q^n)(1-aq^n)\over (1-abq^{2n+1})(1-abq^{2n+2})}+{abq^{2n}(1-q^n)(1-aq^n)\over (1-abq^{2n})(1-abq^{2n+1})}\\
    -{q^{n+\gamma-1}(1+q)(1-q^n)(1-aq^n)\over (1-abq^{2n})(1-q^{n+\gamma+1})}\bigg] \tilde{p}_{n-1}(z;a,bq^2|q)\\
    +\left[{a^2 b^2 q^{4n-1} A_1\over (1-abq^{2n})(1-abq^{2n+1})}-{abq^{3n+\gamma-2}(1+q)A_1\over(1-abq^{2n})(1-q^{n+\gamma+1})}
    +{q^{2n+2\gamma-2}A_1\over (1-q^{n+\gamma})(1-q^{n+\gamma+1})}\right] \tilde{p}_{n-2}(z;a,bq^2|q),
    \end{multline*}
    where
    \[A_1 = {(1-q^{n})(1-q^{n-1})(1-aq^{n})(1-aq^{n-1})\over (1-abq^{2n-1})(1-abq^{2n})}.\]

    \noindent After simplification, we get
    \begin{multline*}
    (-1)^n q^{n^2-n\over 2} {(aq;q)_n \over (abq^{n+3};q)_n} {(1-abq^{2n+1})(1-abq^{2n+2})\over (1-abq^{n+1})(1-abq^{n+2})}
    {(1-q^\gamma)(1-q^{\gamma+1}) \over (1-q^{n+\gamma})(1-q^{n+\gamma+1})} \Phi_n^{(2)}(z)=\tilde{p}_{n}(z;a,bq^2|q)\\
    +{q^{n-1+\gamma} (1+q)(1-abq^{2n+1})(1-abq^{n+1-\gamma})(1-q^n)(1-aq^n)\over (1-abq^{2n})(1-abq^{2n+1})(1-abq^{2n+2})(1-q^{n+\gamma+1})} \tilde{p}_{n-1}(z;a,bq^2|q)\\
    +{q^{2n+2\gamma-2}(1-abq^{n-\gamma})(1-abq^{n+1-\gamma})A_1\over (1-abq^{2n})(1-abq^{2n+1})(1-q^{n+\gamma})(1-q^{n+\gamma+1})}
    \tilde{p}_{n-2}(z;a,bq^2|q).
    \end{multline*}
    Since the coefficient of $\tilde{p}_{n-2}(z;a,bq^2|q)$ is negative if and only if $-n-1<\gamma<-n,$ the result follows from \cite[Theorem 4 and 5]{BrezinskiDR}.
\end{proof}

\subsection{Quasi-orthogonality using $\boldsymbol{q}$-Meixner polynomials}

    \begin{theorem}
        Let $n\in\nn,\ k=1,\cdots,n-1,\ 0<bq<1,\ c>0,$ $\gamma\in\mathbb{R}$ and $\gamma,\ \gamma+k\notin\{0,-1,-2,\cdots,-n\}.$
Then the polynomial $\varphi_n^{(k)}(z)$
is quasi-orthogonal of order $k$ with respect to the weight function \newline $\displaystyle{{(bq;q)_z c^z q^{\binom{z}{2}} \over (-bcq;q)_z (q;q)_z}{(-bcq^{z-k+1};q)_k\over q^{kz}}}$ on the interval of orthogonality $(0,\infty)$ and has at least $(n-k)$ distinct real positive zeros.
    \end{theorem}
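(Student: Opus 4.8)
The plan is to mirror the structure of the proof for the $q$-Laguerre case (Theorem \ref{s2t2}) and the little $q$-Jacobi case, replacing the role of those families by the $q$-Meixner polynomials and using the contiguous relation \eqref{s2L1e2-1}. The key observation is that the general decomposition result of Theorem \ref{s2t1} applies verbatim once we identify the parameters appropriately. Specifically, I would let $r=3,\ s=2,\ \alpha_2=\gamma+k-1,\ \alpha_3=q^{-z},\ \beta_1=bq,\ \beta_2=q^\gamma$, and replace $z$ (the series variable in \eqref{s2t1e1}) by $-q^{n+1}/c$. Applying Theorem \ref{s2t1} then expresses $\varphi_n^{(k)}(z)$ as a linear combination of $_2\phi_1$ polynomials with upper parameters $q^{-n+j}$ and $q^{-z}$ and the shifted denominator parameter; since the $q^\gamma$ factors cancel, each such $_2\phi_1$ is exactly a $q$-Meixner polynomial. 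This yields
\begin{equation}\label{meix-expand}
\varphi_n^{(k)}(z)=\sum_{j=0}^{k}A_j\, M_{n-j}\!\left(q^{-z};\,b,\,\tfrac{c}{q^{\,?}};q\right),
\end{equation}
where the $A_j$ are the nonzero constants of Theorem \ref{s2t1}; the precise shift in the third parameter of $M_{n-j}$ is dictated by how $\beta_2=q^\gamma$ decouples, and I would record it carefully.

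Next I would iterate the contiguous relation \eqref{s2L1e2-1}, which raises the degree by one and shifts the parameter $c\mapsto cq$ at the cost of multiplying by the factor $(bc+q^{-z})$. Iterating $j$ times converts $M_{n-j}(q^{-z};b,c/q^{\,j};q)$ into a linear combination of $M_{n-j},\dots,M_{n}$ in the common parameter $c$, each iteration introducing one more factor from the product $\prod (bc q^{\,\ell}+q^{-z})$. After multiplying \eqref{meix-expand} through by the compensating factor $(-bcq^{z-k+1};q)_k/q^{kz}$ (which is precisely the accumulated product of the $(bc+q^{-z})$-type factors, written as a $q$-shifted factorial), the right-hand side becomes
\begin{equation}\label{meix-final}
\frac{(-bcq^{z-k+1};q)_k}{q^{kz}}\,\varphi_n^{(k)}(z)=\sum_{i=0}^{k}r_{k-i}(q^{-z})\,M_{n-i}(q^{-z};b,c;q),
\end{equation}
where $r_{k-i}$ is a polynomial of degree $k-i$ in $q^{-z}$ and the leading coefficient $r_{0}$ is a nonzero constant. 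This is the exact $q$-Meixner analogue of \eqref{s2t2e3} and \eqref{a3}.

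Finally, I would invoke the orthogonality of the $q$-Meixner polynomials on $(0,\infty)$ with respect to $w(z)=(bq;q)_z c^z q^{\binom{z}{2}}/[(-bcq;q)_z (q;q)_z]$. Multiplying \eqref{meix-final} by $z^{t}\,w(z)$ and summing over $z$, every term $M_{n-i}$ with $n-i>t$ contributes zero by orthogonality, so the sum vanishes for $t=0,1,\dots,n-k-1$ and is nonzero for $t=n-k$ by the nonvanishing of the leading coefficient $r_0$. By the definition of quasi-orthogonality this establishes that $\varphi_n^{(k)}(z)$ is quasi-orthogonal of order $k$ with respect to the stated weight $w(z)\,(-bcq^{z-k+1};q)_k/q^{kz}$, and Shohat's theorem (cf. \cite{BrezinskiDR,Shohat}) gives the at least $(n-k)$ distinct real positive zeros in $(0,\infty)$.

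I expect the main obstacle to be bookkeeping rather than conceptual: one must verify that the product of $(bc+q^{-z})$-type factors accumulated from iterating \eqref{s2L1e2-1} assembles exactly into the $q$-shifted factorial $(-bcq^{z-k+1};q)_k$ appearing in the weight, and that after this multiplication the $q^{kz}$ denominators from the $c$-shifts combine correctly so that each coefficient $r_{k-i}(q^{-z})$ is genuinely a polynomial in $q^{-z}$ of the claimed degree with $r_0$ constant and nonzero. Confirming the exact $c$-shift in \eqref{meix-expand} and that it matches the single contiguous relation available (note only the $c\mapsto c/q$ direction is stated in Lemma \ref{s2L1}(iii)) is the delicate point; the remaining degree count and the orthogonality argument then proceed exactly as in the earlier theorems.
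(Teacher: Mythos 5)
Your proposal follows essentially the same route as the paper: the same specialization of Theorem \ref{s2t1} (with $r=3$, $s=2$, $\alpha_2=\gamma+k-1$, $\alpha_3=q^{-z}$, $\beta_1=bq$, $\beta_2=q^{\gamma}$ and the argument $-q^{n+1}/c$) giving $\varphi_n^{(k)}(z)=\sum_{j=0}^{k}A_j\,M_{n-j}(q^{-z};b,c/q^{j};q)$, followed by iterating \eqref{s2L1e2-1} to return to the common parameter $c$, multiplying through by $(-bcq^{z-k+1};q)_k/q^{kz}$, and concluding via $q$-Meixner orthogonality. The bookkeeping points you flag (the $c/q^{j}$ shift and the assembly of the accumulated factors into the $q$-shifted factorial) are exactly what the paper handles, and the argument is correct.
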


    \begin{proof}
        Letting $r=3,\ s=2$,
        $\alpha_2=\gamma+k-1,\ \alpha_3=q^{-z},\ \beta_1=bq,\
        \beta_2=q^\gamma$ and $z=-\displaystyle{q^{n+1}\over c}$ in \eqref{s2t1e1}, we get
        \begin{multline}
        \varphi_n^{(k)}(z)=A_0\ _2\phi_1\left(\begin{matrix}q^{-n},q^{-z}\\bq\end{matrix};q,-{q^{n+1}\over c}\right)
        +A_1\ _2\phi_1\left(\begin{matrix}q^{-n+1},q^{-z}\\bq\end{matrix};q,-{q^{n+1}\over c}\right)+\cdots\\
        +A_k\ _2\phi_1\left(\begin{matrix}q^{-n+k},q^{-z}\\bq\end{matrix};q,-{q^{n+1}\over c}\right)
        =\sum_{j=0}^k A_j\ M_{n-j}\left(q^{-z};b,{c\over q^j};q\right),\label{m1}
        \end{multline}
        where $A_i,i=0,1,2,\cdots,k$ are non-zero constants depending on $n, \gamma$ and $q$.
Substituting the values of $M_{n-j}\left(q^{-z};b,{c\over q^j};q\right)$ iteratively from \eqref{s2L1e2-1} in \eqref{m1}, we obtain
        \begin{equation}\label{m2}
        \varphi_n^{(k)}(z)=\sum_{i=0}^{k}u_{n-i}(z)M_{n-i}\left(q^{-z};b,c;q\right),
        \end{equation}
        where each coefficient $u_{n-i}(z)$ in \eqref{m2} has a factor of the form $\displaystyle{q^{kz}\over (-bcq^{z-k+1};q)_k}.$
Hence, dividing this factor on both sides of \eqref{m2}, yields
        \begin{equation}\label{m3}
        {(-bcq^{z-k+1};q)_k\over q^{kz}}\varphi_n^{(k)}(z)=\sum_{i=0}^{k}v_{n-i}(z)M_{n-i}\left(q^{-z};b,c;q\right),
        \end{equation}
        where the coefficient $v_{n-k}(z)$ is a constant. Multiplying $z^t$ where $t\in\{0,1,\cdots,n-k\}$ and the $q$-Meixner weight $w(z)=\displaystyle{(bq;q)_z c^z q^{\binom{z}{2}} \over (-bcq;q)_z (q;q)_z},$ and taking summation on both sides of \eqref{m3}, we get
        \begin{eqnarray*}
            \sum_{z=0}^{\infty}z^t w(z) {\left(-bcq^{z-k+1};q\right)_k\over q^{kz}}\varphi_n^{(k)}(z) &=&
            \sum_{z=0}^{\infty} \sum_{i=0}^{k} z^t w(z) v_{n-i}(z)M_{n-i}\left(q^{-z};b,c;q\right)\\
            && \begin{cases}=0;\quad t=0,1,\cdots, n-k-1,\\ \neq 0 ;\quad t=n-k.\end{cases}
        \end{eqnarray*}
This proves that $\varphi_n^{(k)}(z)$ is quasi-orthogonal of order $k$ and therefore it has $(n-k)$ real positive zeros in $(0,\infty)$ with respect to the weight function $\displaystyle{{(bq;q)_z c^z q^{\binom{z}{2}} \over (-bcq;q)_z (q;q)_z}{\left(-bcq^{z-k+1};q\right)_k\over q^{kz}}}.$
\end{proof}

\subsection{Quasi-orthogonality using Al-Salam Carlitz I polynomials}

\begin{theorem}
        Let $n\in\nn,\ k=1,\cdots,n-1,\ a<0,$ $\gamma\in\rr$ and $\gamma,\ \gamma+k\notin\{0,-1,-2,\cdots,-n\}.$
Then the polynomial $\varPhi_n^{(k)}(z)$
is quasi-orthogonal of order $k$ with respect to the weight function $\displaystyle{\left(qz,{qz\over a};q\right)_\infty}$
on the interval of orthogonality $(a,1)$ and has at least $(n-k)$ distinct real positive zeros.
\end{theorem}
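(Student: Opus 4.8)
The plan is to proceed exactly as in the $q$-Meixner and little $q$-Jacobi cases, using Theorem \ref{s2t1} to expand $\varPhi_n^{(k)}(z)$ in the Al-Salam-Carlitz I basis and then appealing to Shohat's characterization recalled in Section \ref{Section-1}. First I would set $r=3,\ s=2,\ \alpha_2=\gamma+k-1,\ \alpha_3=z^{-1},\ \beta_1=0,\ \beta_2=q^\gamma$ and replace the argument by $qz/a$ in \eqref{s2t1e1}. Every term on the right-hand side then carries the lowered second numerator parameter $q^{\alpha_2-k+1}=q^\gamma$, which cancels the denominator parameter $\beta_2=q^\gamma$ and collapses each $_3\phi_2$ into a $_2\phi_1$, namely
\[
{}_2\phi_1\left(\begin{matrix}q^{-n+j},z^{-1}\\0\end{matrix};q,{qz\over a}\right)=\frac{U_{n-j}^a(z;q)}{(-a)^{n-j}q^{\binom{n-j}{2}}},\qquad j=0,1,\dots,k.
\]

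This would give at once the expansion
\[
\varPhi_n^{(k)}(z)=\sum_{j=0}^{k}\frac{A_j}{(-a)^{n-j}q^{\binom{n-j}{2}}}\,U_{n-j}^a(z;q),
\]
with $A_0,A_1,\dots,A_k$ the nonzero constants supplied by Theorem \ref{s2t1}; in particular the coefficients of $U_n^a$ and of $U_{n-k}^a$ are nonzero. Since this is already an expansion of $\varPhi_n^{(k)}$ in the orthogonal family $\{U_m^a(z;q)\}$ with nonvanishing extreme coefficients, Shohat's theorem immediately yields that $\varPhi_n^{(k)}(z)$ is quasi-orthogonal of order $k$ on $(a,1)$ with respect to the weight $(qz,qz/a;q)_\infty$, and the zero-counting result (cf. \cite{BrezinskiDR,Shohat}) then places at least $(n-k)$ distinct zeros of $\varPhi_n^{(k)}$ in $(a,1)$.

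The feature that makes this case shorter than the previous two is that the $_2\phi_1$ representation of $U_m^a(z;q)$ has numerator and denominator parameters $z^{-1}$ and $0$ that are independent of the degree $m$. Hence raising $q^{-n}$ to $q^{-n+j}$ returns $U_{n-j}^a$ with the \emph{same} parameter $a$, so no contiguous relation from Lemma \ref{s2L1} is needed to realign a shifted parameter and no auxiliary factor is required to clear denominators; the expansion \eqref{s2t1e1} is itself the quasi-orthogonal representation. The single point I expect to need explicit comment is that $\beta_1=0$ formally lies outside the hypothesis $\beta_i\notin\{0,-1,\dots,-n\}$ of Theorem \ref{s2t1}. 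This is harmless: the contiguous relation \eqref{s2t1e3} alters only the numerator entries $q^{-n}$ and $q^{\alpha_2}$, while $(0;q)_l=1$ keeps every series well-defined, so the iteration leading to \eqref{s2t1e1} carries over verbatim. This verification is the only obstacle, and it is a minor one.
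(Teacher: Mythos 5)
Your proposal is correct and follows essentially the same route as the paper: both apply Theorem \ref{s2t1} with $r=3$, $s=2$, $\alpha_2=\gamma+k-1$, $\alpha_3=z^{-1}$, $\beta_1=0$, $\beta_2=q^{\gamma}$ and argument $qz/a$ to write $\varPhi_n^{(k)}(z)=\sum_{i=0}^{k}A_i(-a)^{-n+i}q^{-\binom{n-i}{2}}U_{n-i}^{a}(z;q)$, and then conclude quasi-orthogonality (the paper by integrating $z^t$ against the weight over $(a,1)$, you by citing Shohat's equivalent characterization). Your added remark that $\beta_1=0$ formally falls outside the stated hypotheses of Theorem \ref{s2t1} but is harmless, since the contiguous relation only touches the numerator parameters, is a point the paper does not address and is worth keeping.
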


\begin{proof}
Letting $r=3,s=2$ and $\alpha_2=\gamma+k-1,\alpha_3=z^{-1},\beta_1=0,\beta_2=q^\gamma$ and $z=\displaystyle{qz\over a}$ in \eqref{s2t1e1}, we obtain
\begin{multline}
\varPhi_n^{(k)}(z) = A_0\ _2\phi_1\left(\begin{matrix}q^{-n},z^{-1}\\0\end{matrix};q,{qz\over a}\right)
+A_1\ _2\phi_1\left(\begin{matrix}q^{-n+1},z^{-1}\\0\end{matrix};q,{qz\over a}\right)+\cdots\\
+A_k\ _2\phi_1\left(\begin{matrix}q^{-n+k},z^{-1}\\0\end{matrix};q,{qz\over a}\right)
= \sum_{i=0}^k A_i (-a)^{-n+i} q^{-\binom{n-i}{2}}\ U_{n-i}(z;q).\label{as1}
\end{multline}
Upon multiplication of the Al-Salam Carlitz I weight $\displaystyle{\left(qz,{qz\over a};q\right)_\infty}$ and
the factor $z^t,$ and integrating both sides of \eqref{as1} with respect to $z$ over the support $(a,1),$ we obtain
\begin{equation*}
\int_a^1 z^t \displaystyle{\left(qz,{qz\over a};q\right)_\infty} \varPhi_n^{(k)}(z) dz
\begin{cases}=0; & t=0,\dots, n-k-1,\\ \neq 0; & t=n-k.\end{cases}
\end{equation*}
Hence the result follows.
\end{proof}

\section{Interlacing of zeros of quasi-orthogonal $q$-Laguerre polynomials}\label{Section-4}
The following theorem shows the quasi-orthogonality of the $q$-Laguerre polynomials.
\begin{theorem}\label{s4t1}
The $q$-Laguerre polynomials $L_n^{(\delta)}(z;q)$ are
quasi-orthogonal of order $j$ for $-1-j<\delta<-j;$
$j=1\cdots,n-1,$ on $(0,\infty)$ with respect to the weight
function $\displaystyle{\frac{z^{\delta+j}}{(-z;q)_\infty}}.$
\end{theorem}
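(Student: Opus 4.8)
The plan is to express the $q$-Laguerre polynomial $L_n^{(\delta)}(z;q)$ with $\delta$ in the ``quasi-orthogonal range'' $-1-j<\delta<-j$ as a linear combination of genuinely orthogonal $q$-Laguerre polynomials with a shifted parameter $\delta+j>-1$, and then invoke Shohat's characterization quoted in Section~\ref{Section-1}. The key observation is that the relation \eqref{s1e2}, namely
\begin{equation*}
{L}_{n}^{(\delta-1)}(z,q)=q^{-n}{L}_{n}^{(\delta)}(z,q)-q^{-n}{L}_{n-1}^{(\delta)}(z,q),
\end{equation*}
lowers the parameter $\delta$ by one at the cost of introducing a polynomial of one lower degree with the same parameter. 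Reading this relation in the upward direction expresses $L_n^{(\delta)}$ in terms of $L_n^{(\delta+1)}$ and $L_{n-1}^{(\delta+1)}$, so iterating it $j$ times should write $L_n^{(\delta)}(z;q)$ as a combination $\sum_{i=0}^{j}c_{n,i}\,L_{n-i}^{(\delta+j)}(z;q)$ of $q$-Laguerre polynomials whose upper parameter $\delta+j$ now exceeds $-1$ and which are therefore orthogonal with respect to $z^{\delta+j}/(-z;q)_\infty$ on $(0,\infty)$.

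The steps, in order, are as follows. First I would apply \eqref{s1e2} once, rewriting it as $L_n^{(\delta)}=q^{n+?}\bigl(L_n^{(\delta+1)}\text{-term}\bigr)$ plus a lower-degree term, to obtain the base case. Second, I would set up an induction on $j$: assuming $L_n^{(\delta)}$ is a combination of $\{L_{n-i}^{(\delta+j-1)}\}_{i=0}^{j-1}$, apply \eqref{s1e2} to each summand to raise the parameter to $\delta+j$, collecting the resulting $j+1$ terms $L_{n-i}^{(\delta+j)}$, $i=0,\dots,j$. Third, I would verify that the two extreme coefficients, those of $L_n^{(\delta+j)}$ and of $L_{n-j}^{(\delta+j)}$, are nonzero for the given parameter range; this is exactly the condition $c_{n,0}c_{n,j}\neq 0$ that Shohat's theorem demands. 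Finally, since $\delta+j>-1$ places the shifted polynomials squarely in their orthogonality regime with weight $z^{\delta+j}/(-z;q)_\infty$, the quoted criterion immediately yields that $L_n^{(\delta)}(z;q)$ is quasi-orthogonal of order $j$ with respect to that weight, which is the weight $z^{\delta+j}/(-z;q)_\infty$ named in the statement.

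The main obstacle I anticipate is bookkeeping rather than conceptual: tracking the coefficients through $j$ iterations and confirming that the leading and trailing coefficients never vanish. Because each application of \eqref{s1e2} multiplies by powers of $q$ and by factors involving $(1-q^{n+\delta+\ell})$ type quantities, the danger is that the coefficient of the lowest-degree term $L_{n-j}^{(\delta+j)}$ could accidentally vanish for special $\delta$; I would check that within the open interval $-1-j<\delta<-j$ none of these factors is zero, so that $c_{n,j}\neq 0$ holds and the order is \emph{exactly} $j$ rather than smaller. A secondary subtlety is that the constant weight factor $z^{\delta+j}/(-z;q)_\infty$ must match across all the shifted polynomials; since the upper parameter shifts but the weight's $q$-dependence in the denominator $(-z;q)_\infty$ does not, all the $L_{n-i}^{(\delta+j)}$ are orthogonal with respect to the \emph{same} weight $z^{\delta+j}/(-z;q)_\infty$, so no reconciliation of different weights is needed and Shohat's criterion applies directly.
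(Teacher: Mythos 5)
Your proposal is correct and follows essentially the same route as the paper: both iterate the contiguous relation \eqref{s1e2} (with $\delta$ shifted) $j$ times to write $L_n^{(\delta)}(z;q)$ as a linear combination of $L_{n}^{(\delta+j)},\dots,L_{n-j}^{(\delta+j)}$, which are orthogonal with respect to $z^{\delta+j}/(-z;q)_\infty$ on $(0,\infty)$ since $\delta+j>-1$. The only difference is packaging: you invoke Shohat's equivalence to conclude, whereas the paper verifies the vanishing moment integrals directly and appeals to induction; also note that the nonvanishing of the extreme coefficients you worry about is immediate here, since each iteration of \eqref{s1e2} contributes only signed powers of $q$.
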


\begin{proof}
Replacing $\delta$ by $\delta+1$ in \eqref{s1e2}, we obtain
\begin{equation}\label{s4t1e1}
L_{n}^{(\delta)}(z,q)=q^{-n}{L}_{n}^{(\delta+1)}(z,q)-q^{-n}{L}_{n-1}^{(\delta+1)}(z,q).
\end{equation}
Iterating \eqref{s4t1e1} $k$ times, we find that
${L}_{n}^{(\delta)}(z,q)$ is a linear combination of the
polynomials ${L}_{n}^{(\delta+k+1)}(z,q),$
${L}_{n-1}^{(\delta+k+1)}(z,q),\cdots,{L}_{n-k-1}^{(\delta+k+1)}(z,q).$
That is,
\begin{equation*}
{L}_{n}^{(\delta)}(z,q)=\sum_{i=-1}^k
a_{n-i-1}{L}_{n-i-1}^{(\delta+i+1)}(z,q);\quad k=0,1,\cdots,n-2.
\end{equation*}
For $j=1,$ multiplication of $z^{k_1};\ k_1=0,1,\cdots,n-2$ and $\displaystyle{\frac{z^{\delta+1}}{(-z;q)_\infty}}$ in \eqref{s4t1e1},
and integrating the resulting equation with respect to $z$ over the support $(0,\infty),$ yields
\begin{multline}
\int_0^\infty z^{k_1}{L}_{n}^{(\delta)}(z,q)\frac{z^{\delta+1}}{(-z;q)_\infty}dz = \int_0^\infty q^{-n} z^{k_1}{L}_{n}^{(\delta+1)}(z,q)\frac{z^{\delta+1}}{(-z;q)_\infty}dz \\
- \int_0^\infty q^{-n}
z^{k_1}{L}_{n-1}^{(\delta+1)}(z,q)\frac{z^{\delta+1}}{(-z;q)_\infty}dz.\label{s4t1e2}
\end{multline}
The integrals on the right hand side of \eqref{s4t1e2} vanishes since $\delta+1>-1.$
Therefore, for $\delta\in(-2,-1),$ $L_{n}^{(\delta+1)}(z;q)$ is quasi-orthogonal of order $1$
with respect to the weight $\displaystyle{z^{\delta+1}\over (-z;q)_\infty}$ on $(0,\infty).$
Applying induction hypothesis, we have the required result for $j=\{1,\cdots,n-1\}.$
\end{proof}
Let $\nu_{1,n}<\nu_{2,n}<\cdots<\nu_{n,n}$ be the zeros of
${L}_n^{(\delta)}(z,q)$ and $\kappa_{1,n}<\kappa_{2,n}<\cdots<\kappa_{n,n}$ be
the zeros of ${L}_n^{(\delta+1)}(z,q)$. Then the following results
hold true.

\begin{theorem}\label{s4t2}
For $-2<\delta<-1,$ the zeros of ${L}_n^{(\delta)}(z,q)$ interlace
with the zeros of ${L}_n^{(\delta+1)}(z,q)$ and
${L}_{n-1}^{(\delta+1)}(z,q)$.
\end{theorem}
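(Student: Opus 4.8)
The plan is to read off everything from the single contiguous relation \eqref{s4t1e1}. For $-2<\delta<-1$ we have $\delta+1>-1$, so $L_n^{(\delta+1)}(z,q)$ and $L_{n-1}^{(\delta+1)}(z,q)$ are \emph{orthogonal} $q$-Laguerre polynomials: their zeros are real, simple, positive, and the zeros of consecutive members interlace. Relation \eqref{s4t1e1} exhibits $L_n^{(\delta)}(z,q)$ as a linear combination of exactly these two consecutive orthogonal polynomials, which is precisely the configuration handled by the interlacing criterion \cite[Theorem 5]{Joulak} already invoked in the proof of Theorem \ref{s2t3}. So the whole argument reduces to putting \eqref{s4t1e1} in monic form, reading off the connection coefficient, and quoting Joulak.

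Concretely, I would first pass to monic polynomials. From the $_1\phi_1$ representation (using $(q^{-m};q)_m=(-1)^m q^{-\binom{m+1}{2}}(q;q)_m$) the leading coefficient of $L_m^{(\delta+1)}(z,q)$ is $\ell_m^{(\delta+1)}=(-1)^m q^{m(m+\delta+1)}/(q;q)_m$, so with $\tilde L_m^{(\delta+1)}$ denoting the monic polynomial, \eqref{s4t1e1} becomes
\begin{equation*}
\frac{q^{n}}{\ell_n^{(\delta+1)}}\,L_n^{(\delta)}(z,q)=\tilde L_n^{(\delta+1)}(z,q)+a_n\,\tilde L_{n-1}^{(\delta+1)}(z,q),
\qquad a_n=-\frac{\ell_{n-1}^{(\delta+1)}}{\ell_n^{(\delta+1)}} .
\end{equation*}
A short computation of the ratio of leading coefficients collapses the powers of $q$ to give $a_n=(1-q^n)\,q^{-2n-\delta}$, which is \emph{strictly positive} for every $0<q<1$ and every $\delta\in(-2,-1)$.

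With the sign of $a_n$ fixed, the conclusion is immediate: applying the branch of \cite[Theorem 5(i),(ii)]{Joulak} corresponding to $a_n>0$ to the monic identity above yields the interlacing of the zeros $\nu_{i,n}$ of $L_n^{(\delta)}(z,q)$ with both the zeros $\kappa_{i,n}$ of $L_n^{(\delta+1)}(z,q)$ and the zeros of $L_{n-1}^{(\delta+1)}(z,q)$, exactly as in part (c) of Theorem \ref{s2t3}. The only delicate point — and the one I would be most careful about — is the monic bookkeeping: correctly tracking the factor coming from the numerator parameter $q^{-m}$ so that the sign and the $q$-exponent in $a_n$ emerge as stated. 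Once $a_n=(1-q^n)q^{-2n-\delta}>0$ is secured, no estimates on $\delta$ or $q$ are needed beyond $0<q<1$, and the interlacing follows directly from the cited criterion.
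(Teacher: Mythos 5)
Your argument is correct, but it follows a genuinely different route from the paper's. The paper splits the statement into two separate interlacing claims and handles each with its own three-term identity: it applies \cite[Lemma 4(a)]{PKRS} directly to \eqref{s4t1e1} to get interlacing with $L_n^{(\delta+1)}(z,q)$, and then invokes a second relation, Moak's \cite[(4.14)]{Moak-1981}, expressing $L_{n-1}^{(\delta+1)}(z,q)$ through $L_n^{(\delta)}(z,q)$ and $L_{n-1}^{(\delta)}(z,q)$, together with \cite[Lemma 4(b)]{PKRS} to get interlacing with $L_{n-1}^{(\delta+1)}(z,q)$; no leading coefficients are ever computed. You instead exploit the fact that \eqref{s4t1e1} already realises $L_n^{(\delta)}(z,q)$ as an order-$1$ quasi-orthogonal polynomial relative to the orthogonal family $\{L_m^{(\delta+1)}(z,q)\}$ (Theorem \ref{s4t1} with $j=1$), pass to monic form, and feed the single coefficient $a_n=(1-q^n)q^{-2n-\delta}>0$ into \cite[Theorem 5]{Joulak} exactly as in Theorem \ref{s2t3}(c). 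Your leading-coefficient computation checks out ($\ell_m^{(\delta+1)}=(-1)^m q^{m(m+\delta+1)}/(q;q)_m$ gives the stated exponent $-2n-\delta$ and the correct positive sign), so the argument is complete. What your route buys is economy and extra information: one identity instead of two, consistency with the machinery already set up in Section \ref{Section-2}, and --- because the sign of $a_n$ is pinned down --- the \emph{specific} interlacing configuration (the branch of Joulak's theorem corresponding to $a_n>0$), whereas the paper's statement and proof only assert interlacing. What the paper's route buys is that it avoids any normalisation bookkeeping, which is precisely the step you flag as the delicate one.
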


\begin{proof}
Clearly, the coefficients of ${L}_n^{(\delta+1)}(z,q)$ and ${L}_{n-1}^{(\delta+1)}(z,q)$ in \eqref{s4t1e1} are continuous and
have constant sign on $(0,\infty).$
Hence, \cite[Lemma 4(a)]{PKRS} implies all the zeros of ${L}_n^{(\delta)}(z,q)$ are real, simple and interlace with ${L}_n^{(\delta+1)}(z,q)$
as $\nu_{1,n}<\kappa_{1,n}<\nu_{2,n}<\kappa_{2,n}<\cdots<\nu_{n,n}<\kappa_{n,n}.$
Again from \cite[(4.14)]{Moak-1981}, we have
\begin{equation*}
{L}_{n-1}^{(\delta+1)}(z,q)=
\frac{(1-q^{n})q^{-n-\delta}}{z(q-1)}{L}_{n}^{(\delta)}(z,q)+\frac{(1-q^{n+\delta})q^{-n-\delta}}{z(1-q)}{L}_{n-1}^{(\delta)}(z,q).
\end{equation*}
Further applications of \cite[Lemma 4(b)]{PKRS} shows that
the real, simple zeros of ${L}_{n-1}^{(\delta+1)}(z,q)$ interlace
with those ${L}_{n}^{(\delta)}(z,q)$ as
$\nu_{1,n}<\kappa_{1,n-1}<\nu_{2,n}<\kappa_{2,n-1}<\cdots<\nu_{n-1,n}<\kappa_{n-1,n-1}<\nu_{n,n}.$
\end{proof}

\end{document}